\newcommand{\Z}{\mathbb{Z}}
\newcommand{\R}{\mathbb{R}}
\newcommand{\U}{\mathbb{U}}
\newcommand{\E}{\mathbb{E}}
\newcommand{\mc}{\mathcal}
\newcommand{\mb}{\mathbb}
\newcommand{\eps}{\varepsilon}
\newcommand{\ind}{{\bf 1}}
\renewcommand{\P}{\mathbb{P}}
\newcommand{\vertiii}[1]{{\left\vert\kern-0.25ex\left\vert\kern-0.25ex\left\vert #1 
    \right\vert\kern-0.25ex\right\vert\kern-0.25ex\right\vert}}
\DeclareMathOperator{\Cov}{Cov}
\DeclareMathOperator{\Ber}{Ber}
\newtheorem{thm}{Theorem}[section]
\newtheorem{Prop}[thm]{Proposition}
\newtheorem{Lem}[thm]{Lemma}
\def\XXint#1#2#3{{\setbox0=\hbox{$#1{#2#3}{\int}$ }
\vcenter{\hbox{$#2#3$ }}\kern-.6\wd0}}
\title{Random clusters in the Villain and XY models}
\author{Julien Dub\'edat  \thanks{Department of Mathematics, Columbia University, 2990 Broadway, New York, NY 10027, USA.}  \and Hugo Falconet \thanks{Courant Institute, New York University, 251 Mercer street, New York, NY 10012, USA.}}
\date{\today}
\begin{document}

\maketitle

\begin{abstract}
In the Ising and Potts model, random cluster representations provide a geometric interpretation to spin correlations. We discuss similar constructions for the Villain and XY models, where spins take values in the circle, as well as extensions to models with different single site spin spaces. In the Villain case, we highlight natural interpretation in terms of the cable system extension of the model. We also list questions and open problems on the cluster representations obtained in this fashion. 
\end{abstract}


\section{Introduction}

The planar rotator model, or XY model, is a continuous spin model where single site spins take values in the unit circle $\mb{U}$ which exhibits the Berezinskii-Kosterlitz-Thouless phase transition \cite{Berezinsky:1970fr, Kosterlitz_1973}, a fact proved first by Fr\"ohlich and Spencer \cite{FS81} and revisited more recently in \cite{KP17, EM21, AHPS}. This result states that above a critical temperature $T_c$, the decay of spin correlation is exponential, whereas there is only a power law decay below $T_c$, i.e. $\langle \sigma_x \sigma_y \rangle_T = |x-y|^{- \alpha_T + o(1)}$
for some exponent $\alpha_T>0$ whenever $T < T_c$. Moreover, in this low temperature phase it is conjectured since the work of Fr\"ohlich and Spencer \cite{FS83} that the the XY model should behave at large scale like $e^{i T_{\mathrm{eff}} \Phi}$, where $\Phi$ is the planar Gaussian free field (GFF).
 
\medskip

The Villain model is a variant with the same phenomenology which is more tractable due to exact duality identities involving the integer-valued GFF. In this article, the Villain interaction is singled for a related but distinct reason, namely its relation with the so-called cable systems: there is a natural way to extend the spins defined on the vertices of the lattice to a continuous family of spins on the edges with a locally Brownian structure. Similar constructions were used in the context of isomorphism theorems between the Gaussian free field and the occupation field of trajectories by Lupu in \cite{Lupu_inter}, and then in the context of the first passage sets of the $2d$ GFF in \cite{ALS20}, the set of points in the domain that can be connected to the boundary by a path along which the GFF is greater than or equal to a fixed height.

\medskip

Our goal is to provide a geometric interpretation to spin correlations: first, for the Villain model in Proposition \ref{prop:Villain-k=1} and Proposition \ref{prop:Villain-k=2} below, by introducing random clusters in the extended Villain model, and then, for appropriate random cluster constructions in the XY model, which is the content of Theorem \ref{thm:general} below. In fact, this theorem covers more generally the O($2$) model, the XY and Villain models being special cases. Our constructions have the same spirit as the one of the Fortuin-Kasteleyn (FK)/random cluster representations of the Ising and Potts models (the joint distribution of random clusters with spins goes under the name of Edwards-Sokal coupling \cite{Edwards-Sokal} \cite[Section 1.4]{Grimmett}).  In the Ising model, two equal nearest neighbor spins are in the same cluster with a fixed probability, independently of other edges and the spins themselves can be retrieved from the geometrical clusters by attributing random signs to each cluster  \cite[Section 1.4]{Grimmett}. Then, \cite[Theorem 1.6]{Grimmett} states
$$
\langle \sigma_x \sigma_y \rangle_{T_c} \asymp \mb{P}( x \leftrightarrow y),
$$
where in the right-hand side $x$ is connected to $y$ if they are in the same cluster. In fact, they are equal up to a multiplicative constant, this doesn't specifically rely on being at the critical temperature and the same holds for the Potts model. Our motivation is twofold: first, akin to the FK model, these random clusters can be seen as a tool to study the continuous spin models; second, the FK model is a model interesting in itself (it provides a continuous interpolation of the $q$-Potts model in which $q$ is restricted to be an integer) and we believe the same is true in our case.

\medskip

The planar Ising model, with spins taking values only in $\{+1, -1\}$ has a phenomenology different than the one of the aforementioned continuous spin models. Rather, it is only at the critical temperature that the spin correlations have polynomial decay and below that temperature, the two-point correlation function may not vanish (as the separation goes to infinity), depending on the boundary conditions/Gibbs measure considered. Indeed, in this case and at low temperature, there is no uniqueness of translation invariant infinite-volume measure. Although we expect qualitative similarities, the interfaces of our random clusters in the KT phase and the interfaces in the critical Ising/FK model may also be different, and this is discussed in further details below.

\medskip

The Swendsen-Wang (SW) algorithm is commonly used to implement Monte Carlo simulations of the Ising spin model; at each step an FK cluster of spins (sampled conditionally on the spins) is flipped. More precisely, a step of the algorithm consists of the following: first, a bond is formed between every pair of nearest neighbors that are equal, with an explicit probability $p$ depending on the temperature and coupling constant of the model. Then, all spins in each cluster are flipped. Finally, all bonds are erased and the step is complete. The algorithm generalizes to the $q$-state Potts models in which each lattice site corresponds to a variable that can take $q$ different values.  One of these $q$ values is assigned with probability $1/q$ to each cluster and all variables in the cluster take this new value.  In \cite{Wolff}, when considering continuous spin models, Wolff replaced the global spin-inversion operation by a reflection operation in which only the connected component of one spin chosen uniformly at random is reflected over a randomly oriented plane. At each iteration, a new spin and an orientation of the plane are chosen.  A review of cluster algorithms in a general framework (which includes Potts model, random surfaces, continuous spins, among others) can be found in Section 2 of \cite{Omri-Peled}.

In the case of the XY model, Chayes considered a graphical representation based on the Wolff algorithm in \cite{Chayes-1}. He did so by writing spins  in the form $(\sigma_x \cos \theta_x , \tau_x \sin \theta_x)$ where $\sigma_x, \tau_x \in \{-1,1 \}$ and then by considering the FK-Ising model associated with one of these $\pm 1$ spins.  In particular, he proved that these percolation measures satisfy the FKG property and obtained a characterization of the positive magnetization of the system in term of percolation in the graphical representation (see also \cite{Chayes-2} for an extension to the Heisenberg model). The clusters he considered are the same as those in Lemma \ref{lem:preserve-xy-1}. In this work, in order to obtain result for higher spin observables, we also consider clusters made of correlated bonds (instead of only one bond as in \cite{Chayes-1, Chayes-2, Omri-Peled}).

\medskip

To conclude this introduction, we mention that the planar Villain and XY models have recently been the center of a regain of interest in the mathematics community, in particular with the the works \cite{Newman-Wu, Wirth19, Garban-reconstruction, Garban-quantitative} and the ones mentioned above. Furthermore,  some progress was made on the Fr\"ohlich Spencer conjecture as \cite{Bauerschmidt-dGFF-1} proved the convergence of the integer-valued Gaussian free field  towards the Gaussian free field, when the temperature is sufficiently large (which corresponds to low temperature continuous spin model). Ultimately, we refer the reader who wish to learn more about these spin models to \cite{FV-book, Peled-LN}, for an introduction to the field.

\medskip

This paper is organized as follows. First, we define the Villain model and cable systems in Section 2. Then, in Section 3 we prove some two sided estimates for some spin observables in terms of connectivity properties of random clusters. In Section 4, we generalize these results to  O($2$) models by introducing random clusters made of open/close bonds with appropriate joint distributions. In Section 5.1, we explain in which sense the usual random cluster model and its dilute version (related with the Blume-Capel-Potts model) can be seen from random clusters associated with similar cable systems but with a different continuous-time Markov chain. Finally, we list questions and open problems in Section 5.2.

\paragraph{Acknowledgments.} We wish to thank Ron Peled for pointing out several references.

\section{Villain model and cable systems}

\paragraph{Villain model.} The Villain model is a statistical mechanics model with spins taking values in the unit circle $\U\simeq\R/2\pi\Z$ and interactions given by a $\theta$ function. The specific choice of interaction was motivated by the following considerations. If one considers a low temperature $T$ (or $\beta$ large), then the XY weight $e^{\beta \cos (\theta_x-\theta_y)}$ can be approximated by $e^{- \frac{1}{2} \beta (\theta_x -\theta_y)^2}$ as the spins tend to be aligned in this regime, by a Taylor expansion. In order to preserve a model well-defined for angles modulo $2\pi$, it is natural to consider a periodized version of this interaction, leading to the Villain model. The reason one is interested in this approximation comes from the fact that the Fourier modes of this new interaction are Gaussian terms, which can be used to show powerful identities relating Villain observables to dual, integer-valued height models.

 More precisely, let ${\mc G}=(V,E)$ be a finite subgraph of $\Z^d$ (typically, a box or a torus). The probability measure on configurations  $(u_x)_{x\in V}=(e^{i\theta_x})_{x\in V}\in \U^V$ is given by
$\frac{1}{\mc Z}e^{-H(\theta)}\prod_{x\in V}d\theta_x$ where the energy is defined as
$$
e^{-H(\theta)}=\prod_{(xy)\in E}\sum_{n\in\Z}\exp\left(-\beta (\theta_x-\theta_y+2n\pi)^2\right).
$$
Here, $\beta>0$ plays the role of inverse temperature. 

We now turn to the Fourier decomposition of this interaction. To this end, recall the Jacobi $\Theta$ function ($\Im(\tau)>0$)
$$\Theta(z,\tau)=\sum_{n\in\Z}e^{i\pi n^2\tau}e^{2i\pi nz},$$
so that (for $t=4\pi\beta>0$, $v=(\theta_x-\theta_y)/2\pi$)
$$\Theta(itv,it)=e^{\pi tx^2}\sum_{n\in\Z}\exp(-\pi t (v+n)^2).$$
We have the functional equation (e.g. (8.9) in \cite{Chandra}) $\sqrt t\Theta(z, it)=e^{-\frac{\pi z^2}t}\Theta(\frac{z}{it},it^{-1})$ or
\begin{align}
\label{eq:low-high}
\sqrt t\sum_{n\in\Z}\exp(-\pi t (x+n)^2) & =\sum_{n\in\Z}e^{-\pi n^2 t^{-1}}\exp(2i\pi n x)  \nonumber \\
& =1+2\sum_{n=1}^\infty e^{-\pi n^2 t^{-1}}\cos(2\pi nx),
\end{align}
a standard instance of the Poisson summation formula. Note that the LHS (resp RHS) series converges faster when $t>1$ (resp. $t<1$).

\paragraph{Cable systems.}

While the Villain model was introduced for its special duality properties, we are exploiting here an indirectly related property. Observe that
\begin{equation}
\label{eq:heat-kernel}
p_t(\theta_1,\theta_2)=\frac{1}{\sqrt{2\pi t}}\sum_{n\in\Z}\exp\left(-\frac{1}{2t} (\theta_1-\theta_2+2n\pi)^2\right)
\end{equation}
where $(p_t)$ denotes the transition function for Brownian motion on $\U\simeq\R/2\pi\Z$. In particular, by Chapman-Kolmogorov:
$$
p_{t+s}(\theta_1,\theta_3)=\int p_t(\theta_1,\theta_2)p_s(\theta_2,\theta_3)d\theta_2.
$$
Then, the energy can be written as a product of transition functions, i.e.
\begin{equation}
\label{eq:energies}
e^{-H(\theta)}=(2\pi t)^{|V|/2}\prod_{(xy)\in E}p_t(\theta_x,\theta_y),
\end{equation}
with $1/2t=\beta$ ($t$ is thus a temperature parameter).

\smallskip

Now, we consider the cable system (also known as a metric graph) $\tilde{ \mc{G}}$ obtained by adding a segment of length $t(e)=t$ between $x$ and $y$, where $e=(xy)$ is any edge of ${\mc G}$ (i.e., any abstract edge in ${\mc G}$ becomes a line segment in $\tilde {\mc G}$). One can extend the Villain model from a measure on ${\mb U}^V$ to a measure on $C_0(\tilde {\mc G},\U)$ by sampling a $\mb{U}$-valued Brownian bridge (of duration $t$, from $\theta_x$ to $\theta_y$) to fill the values on these added intervals. This is modeled on the construction of the Gaussian Free Field (GFF) on cable systems by Lupu \cite{Lupu_inter} (where the single site spin space is $\R$, rather than $\U$ here). Earlier references on diffusions in cable systems include \cite{Baxter-Chacon,  Enriquez-Kifer, Folz}.

\smallskip

More generally, the construction works if the spin space is, say, a compact manifold  $S$ with a Riemannian metric whose associated volume form is $\mu$ and $(p_t)$ is the heat kernel for a symmetric diffusion on that manifold w.r.t. $\mu$. One can also consider $S$ non-compact if $\mu$ is finite; if the underlying graph ${\mc G}$ is finite, there is no restriction. The interaction is still given by $\prod_{(xy)\in E}p_t(u_x,u_y)$, the symmetry assumption implies $p_t(u_x,u_y) = p_t(u_y,u_x)$ and $(p_t)$ satisfies
\begin{equation}
\label{eq:CK}
p_{t+s}(u_x, u_y)=\int_S p_t(u_x, u_z)p_s(u_z, u_y)\mu(du_z).
\end{equation}
This property is used extend the distribution on vertices to the refined graphs with Kolmogorov extension (e.g, the midpoints of edges can be added by using $p_{t/2}$ for the interaction on each half-edge), and the consistency follows from  \eqref{eq:CK}. In the case of the GFF, $p_t$ is the heat kernel on $\mb{R}$ considered by Lupu and in the case of the Villain model described above, it is the heat kernel on $\mb{U}$.

The resulting extended  model $(X_v)_{v \in \tilde{\mc{G}}}$ provides a continuous stochastic process with sample paths in $C_0(\tilde {\mc G},S)$ which  satisfies a simple and strong Markov property as described in the lemma below.

In the following lemma, we consider two setups. The first one includes extended models with sample paths in $C_0(\tilde {\mc G},S)$, coming from symmetric diffusions. The second one includes Markov chains with finite state spaces, in which the extension comes from sampling Markov chain bridges. 

\begin{Lem}
Consider an extended model $(X_v)_{v \in \tilde{\mc{G}}}$ associated with a Markov process or chain as described above and a random connected compact set $K \subseteq \tilde{\mc{G}}$ such that $\{ K \subset U \}$ is measurable w.r.t. $(X)$ for all open sets $U$. Then,  conditionally on $(K, X_{| K})$, the distribution of $X_{|K^c}$ is described as an extended model with the same transition probabilities (i.e., any finite marginal is described by a nearest neighbor spin system whose edge energies are of the form \eqref{eq:energies} but depend on the length of the edges) with boundary condition $(X_v)_{v \in \partial K}$. 
\end{Lem}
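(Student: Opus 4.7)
The plan is to establish the statement first for \emph{deterministic} compact connected $K$ and then lift to random $K$ by a discretization argument of strong-Markov type, using the measurability hypothesis $\{K\subset U\}\in\sigma(X)$ as a substitute for a stopping time.

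For deterministic $K$, the key observation is that the extended model $(X_v)_{v\in\tilde{\mc G}}$ is by construction a projective limit (via Kolmogorov extension) of finite-dimensional marginals, each of which is a Gibbs measure on a refined graph whose edge weights are of the product form \eqref{eq:energies} with lengths read off the cable structure. Since $K$ is compact and lives inside the locally finite cable graph $\tilde{\mc G}$, its topological boundary $\partial K$ consists of finitely many points on finitely many edges. One refines the discretization so that $\partial K$ is contained in the vertex set; the product structure of the energy then factors cleanly into contributions supported in $K$, contributions supported in $K^c$, and contributions along edges incident to $\partial K$. Combined with the Chapman--Kolmogorov identity \eqref{eq:CK}, this factorization shows that, conditionally on $X_{|K}$, the law of $X_{|K^c}$ on each connected component of $\tilde{\mc G}\setminus K$ is an independent extended model with the same transition kernel $p_t$ and with boundary data $(X_v)_{v\in\partial K}$. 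Passing to the projective limit extends this conditional description from finite-marginal form to the full path-space statement (for diffusions, using continuity of sample paths; for the Markov chain case, directly on the finite state space).

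For random $K$, the plan is to approximate from outside by a sequence $(K_n)$ each taking values in a countable family of deterministic compact connected sets, in a way that is measurable with respect to $X$. Fix a countable base $(U_m)_{m\ge1}$ of open neighborhoods of points of $\tilde{\mc G}$ consisting of finite unions of open dyadic sub-intervals of edges, and let $(F_i)_{i\ge1}$ be the corresponding countable family of closed complements inside a fixed compact exhaustion of $\tilde{\mc G}$. The hypothesis that $\{K\subset U\}$ is $\sigma(X)$-measurable for every open $U$ allows one to define, at scale $2^{-n}$, a random set $K_n$ equal to the intersection of all $F_i$ from a dyadic scale $\le 2^{-n}$ that contain $K$. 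Each $K_n$ is deterministic once $(\mathbf{1}_{\{K\subset U_m\}})_{m}$ is revealed, hence $\sigma(X)$-measurable; it takes values in a countable set of deterministic compact connected sets; and $K_n\searrow K$. For each fixed realization of $K_n$ the deterministic case applies, so conditionally on $(K_n,X_{|K_n})$ the field $X_{|K_n^c}$ is an extended model with boundary data on $\partial K_n$. Letting $n\to\infty$ and using continuity of sample paths in $C_0(\tilde{\mc G},S)$ (respectively the discrete structure in the Markov chain case) one recovers the same statement with $K_n$ replaced by $K$, after verifying that $X_{|\partial K_n}\to X_{|\partial K}$ along the convergent approximations.

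The main obstacle is the passage from deterministic to random $K$: one must produce the approximating sets $K_n$ so that (a) their indicator $\mathbf{1}_{\{K_n=F\}}$ is genuinely in $\sigma(X_{|K_n})$ rather than only in $\sigma(X)$, so that conditioning on $(K_n,X_{|K_n})$ is meaningful and reduces to conditioning on the deterministic event $\{K_n=F\}$ followed by application of the deterministic Markov property, and (b) the limit $K_n\searrow K$ does not destroy the factorization across $\partial K$. Point (a) is precisely the analog of the standard construction of approximating a stopping time by countably-valued stopping times, adapted to the metric-graph filtration generated by the traces of $X$ on closed subgraphs; point (b) reduces to continuity of the map $\partial K_n\mapsto X_{|\partial K_n}$, which follows from the $C_0$-regularity of sample paths and the compactness of $K$.
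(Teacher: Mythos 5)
Note first that the paper does not actually supply a proof of this lemma; it states the result and refers to Lupu's treatment of the strong Markov property for the cable-system GFF \cite[Section~3]{Lupu_inter} and to the two-valued local set literature \cite{ALS20,ASW,AS}. Your overall plan --- deterministic $K$ via factorization of the Gibbs weight and Chapman--Kolmogorov, then random $K$ by approximation through countably-valued random sets --- is the correct skeleton and matches what those references do.

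There is, however, a genuine gap at your step (a), and it traces to the hypothesis of the lemma as written. You correctly observe that you need $\{K_n=F\}$ to lie in $\sigma(X_{|F})$, but the only hypothesis available to you is that $\{K\subset U\}$ is measurable with respect to the full field $X$, not with respect to $X_{|\bar U}$. From global measurability alone the conclusion is actually false: on a path with vertices $a,b,c$ (two unit edges) and $\partial=\{a\}$, set $K=\{a\}$ if $X_c>0$ and $K=\{a\}\cup[a,b]$ otherwise. Then $\{K\subset U\}$ is $\sigma(X)$-measurable for every open $U$, yet on the event $\{K=\{a\}\cup[a,b]\}$ the conditional law of $X_{|K^c}$ given $(K,X_{|K})$ carries the constraint $X_c\leq 0$ and is therefore not the unconstrained extended model with boundary data $X_b$. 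The hypothesis that makes the lemma true, and the one implicit in the cited references, is the local-set/optionality condition $\{K\subset U\}\in\sigma(X_{|\bar U})$. Once that is adopted, your dyadic $K_n$ inherits the property exactly as a dyadic approximation of a stopping time is again a stopping time, and the rest of your argument goes through. As written, the appeal to the stopping-time analogy presupposes the very condition that the stated hypothesis omits; you flagged the issue but did not close it. A secondary technical point: the intersection of the approximating closed sets containing $K$ can be disconnected (e.g., on a cycle); one should pass to the connected component containing $K$ before invoking the deterministic case.
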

In the above lemma, the sigma algebra associated with $X_{| \partial K}$ is that of $\cap_{\eps > 0} \sigma ( X_{K^{\eps} \backslash K} )$ where $K^{\eps}$ is an $\eps$-neighborhood of $K$. In particular, if $\partial K$ corresponds to the location of a jump in the case of the extension with Markov chains, although the value at the jump is ambiguous, the left-limit and the right-limit around it are not. This case does not occur for extensions using a diffusion. An example below will be the case of the dilute Ising model, say with some boundary conditions, for which the state space is $\{-1,0,1\}$ and where $K_x$ is the closure of $\{ y \in \tilde{\mc{G}} ~ | ~ \text{there is a path } \pi \text{ from } x \text{ to } y \text{ such  that } X_v = X_x \forall v \in \pi \}$. The set $\partial K_x$ consists exactly of jumps, can be seen as with two values (the limits from the inside and outside)  and of the boundary of $\tilde{\mc{G}}$. Any tiny neighborhood around the jumps will reveal the value of the boundary conditions (which will always be $0$ in the case of the dilute Ising model below). 


\smallskip

The strong Markov property was considered in the context of the GFF on metric graphs in Section 3 of \cite{Lupu_inter}. Also, still in this context, an example of a random compact set is the first passage set of level $-a$ considered in \cite{ALS20}, which is the set of points on the metric graph that are joined to the boundary by some path, on which the extended version of the GFF does not go below the level $-a$. The two-valued local sets considered in \cite{ASW,AS} are also very close to the set-up here.

\section{Random clusters in the Villain model}

In the Villain or XY models, there is a natural family of primary fields: $e^{ik\theta_x}$ for $k\in\Z$; this is especially of interest in the low temperature (KT) phase where it is expected that
\begin{equation}\label{eq:KTconj}
\langle \prod_j e^{ i k_{x_j} \theta_{x_j} } \rangle \propto \langle \prod_j  e^{i k_j \Phi_{x_j}} \rangle = \prod_{i<j} |x_i - x_j|^{- \alpha_T k_i k_j}
\end{equation}
where $\Phi$ is a planar Gaussian free field with some effective coupling constant fixing $\alpha_T$.

\medskip

Consider the following connectivity events given $\theta\in C(\tilde {\mc G},\U)$ a configuration in the extended Villain model: if $S$ is a subset of $\U$, and $x,y$ are points in $\tilde {\mc G}$, we write
\begin{equation}\label{eq:clusterdef}
x\stackrel{S}{\longleftrightarrow} y
\end{equation}
is there is a path $\gamma$ from $x$ to $y$ in $\tilde{\mc G}$ such that $\theta(\gamma)\subset S$. In other words, the clusters are the connected components of $\theta^{-1}(S)$. We will be in particular interested in the case $S=\{\pm e^{i\theta_0}\}^c$. 

\begin{Lem}
If $(xy)\in E$ and $t=t(xy)$, the probability that $u(z)\in \{\pm i\}^c$ for all $z\in [xy]$ given $u(x)=e^{i\theta_x}, u(y)=e^{i\theta_y}$ is:
\begin{align*}
\left\{\begin{array}{ll}
0&{\rm if\ } \cos(\theta_x)\cos(\theta_y)\leq 0\\
p_t^{[-\pi/2,\pi/2]}(\theta_x,\theta_y)/p_t(\theta_x,\theta_y)&{\rm if\ } \theta_x,\theta_y\in (-\pi/2,\pi/2)\\
p_t^{[-\pi/2,\pi/2]}(\theta_x-\pi,\theta_y-\pi)/p_t(\theta_x,\theta_y)&{\rm if\ } \theta_x,\theta_y\in (\pi/2,3\pi/2) \mod 2\pi
\end{array}
\right.
\end{align*}
\end{Lem}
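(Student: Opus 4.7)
The plan is to exploit the fact that, conditionally on the endpoint values, the process on the edge $[xy]$ is a $\U$-valued Brownian bridge of duration $t$ from $\theta_x$ to $\theta_y$, and to lift this bridge to $\R$ through the covering map $\theta \mapsto e^{i\theta}$.

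First observe that $\{\pm i\}^c \subset \U$ consists of two disjoint open arcs, $A_+ = \{e^{i\theta} : \theta \in (-\pi/2, \pi/2)\}$ (where $\cos>0$) and $A_- = \{e^{i\theta} : \theta \in (\pi/2, 3\pi/2)\}$ (where $\cos<0$). By continuity, a path avoiding $\{\pm i\}$ must lie entirely in one of these two arcs. If $\cos(\theta_x)\cos(\theta_y) \le 0$, the endpoints do not lie in a common arc (or one of them is actually $\pm i$), so the probability vanishes; this gives the first case.

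Assume now $\theta_x, \theta_y \in (-\pi/2, \pi/2)$. Since Brownian motion on $\U$ is the projection of standard Brownian motion on $\R$, the $\U$-bridge admits a unique continuous lift $\tilde X$ to $\R$ starting at $\theta_x$, terminating at $\theta_y + 2\pi N$ for some random $N \in \Z$; conditionally on $N=n$, $\tilde X$ is a genuine $\R$-Brownian bridge from $\theta_x$ to $\theta_y + 2\pi n$, and, reading off \eqref{eq:heat-kernel},
\begin{equation*}
\P(N = n) = \frac{p_t^{\R}(\theta_x, \theta_y + 2\pi n)}{p_t(\theta_x, \theta_y)},
\end{equation*}
where $p_t^{\R}$ denotes the Gaussian heat kernel on $\R$. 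The event that the $\U$-bridge stays in $A_+$ is equivalent to $\tilde X$ staying in $(-\pi/2, \pi/2) \subset \R$, which in particular forces $N = 0$. Disintegrating,
\begin{equation*}
\P\bigl(\theta \in A_+ \text{ throughout}\bigr) = \P(N = 0) \cdot \P\bigl(\tilde X \in (-\pi/2, \pi/2) \bigm| N = 0\bigr) = \frac{p_t^{\R}(\theta_x, \theta_y)}{p_t(\theta_x, \theta_y)} \cdot \frac{p_t^{[-\pi/2, \pi/2]}(\theta_x, \theta_y)}{p_t^{\R}(\theta_x, \theta_y)},
\end{equation*}
where $p_t^{[-\pi/2, \pi/2]}$ is the Dirichlet heat kernel on the interval, identified via the standard fact that it is the density of the unconstrained bridge times the probability that it avoids the boundary. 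The factors of $p_t^{\R}$ cancel, yielding the formula of the second case. The third case reduces to the second by the rotation $\theta \mapsto \theta - \pi$ on $\U$, which exchanges $A_+$ and $A_-$, preserves the heat kernel on $\U$, and produces exactly the shifted Dirichlet kernel $p_t^{[-\pi/2,\pi/2]}(\theta_x-\pi, \theta_y-\pi)$.

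The only step requiring genuine care is the decomposition of the $\U$-bridge into $\R$-bridges indexed by the winding number $N$, which is the direct probabilistic content of the Poisson summation identity \eqref{eq:heat-kernel}; everything else is routine. I do not anticipate a serious obstacle: the key observation is that restricting to $A_+$ pins the winding number $N$ to zero, collapsing the sum over winding classes to a single unconstrained-to-constrained ratio on an interval of length $\pi$.
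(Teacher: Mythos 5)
Your proof is correct and fills in the reduction the paper leaves implicit: the edge process is a $\U$-valued Brownian bridge, and after lifting to $\R$ the probability of staying in one arc becomes a Dirichlet heat-kernel ratio on the interval $(-\pi/2,\pi/2)$, with the winding number forced to zero. The paper also notes a slightly slicker route --- a single reflection of the $\U$-valued Brownian motion across the axis through $\pm i$ --- which gives the equivalent expression in \eqref{eq:bond-proba-Villain} without lifting or winding-number bookkeeping.
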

The first case corresponds to two spins in the closure of the two connected components of $\mb{U} \backslash \{i, -i\}$.  Here $p_t^{[a,b]}$ denotes the heat kernel for linear Brownian motion in the segment $[a,b]$, absorbed on the boundary. By a repeated use of the reflection principle, one has (see. e.g. \cite{BMhandbook}, Appendix 1.6)
\begin{equation}\label{eq:linBMref}
p_t^{[-\pi/2,\pi/2]}(\theta_1,\theta_2)=\frac{1}{\sqrt{2\pi t}}\sum_{n\in\Z}\left(e^{-\frac{1}{2t}(\theta_1-\theta_2+2n\pi)^2}-
e^{-\frac{1}{2t}(\theta_1+\theta_2+(2n-1)\pi)^2}
\right).
\end{equation}
Hence the conditional probability can be written as
\begin{equation}
\label{eq:bond-proba-Villain}
\frac{f_t(\theta_1-\theta_2)-f_t(\theta_1+\theta_2-\pi)}{f_t(\theta_1-\theta_2)}  1_{\cos(\theta_x) \cos(\theta_y)>0}
=\frac{p_t(\theta_1,\theta_2)-p_t(\theta_1,\pi-\theta_2)}{p_t(\theta_1,\theta_2)}  1_{\cos(\theta_x) \cos(\theta_y)>0}
,
\end{equation}
where $f_t(x)=\sum_{n\in\Z}\exp(-\frac{1}{2t}(x+2n\pi)^2)$. Note that the second expression has a simple interpretation (and direct proof) in terms of a single application of the reflection principle (across the vertical axis) applied directly to the BM on $\U$ (rather then repeated applications of the reflection principle to its lift to $\R$).

If $x,y\in{\mc G}$, the conditional probability of $x\stackrel{\{\pm i\}^c}{\longleftrightarrow} y$ given $\theta_{|{\mc G}}$ is the probability that $x$ and $y$ are connected by a random cluster sampled as follows: for each $(xy)\in E$, the edge is open in the random cluster with probability \eqref{eq:c-constraint} independently of other edges.

\medskip

We consider an underlying graph ${\mc G}$ which is connected and with a non-empty subset $\partial\subset V$ designated as the boundary. We consider the Villain model with boundary conditions $\theta=1$ on $\partial$; $\langle\cdot\rangle$ denotes the expectation under the corresponding Villain measure. Under this measure, we remark that the distribution of $(u_x)_x$ is equal to that of $(\bar u_x)_x$, and consequently $\langle e^{i\theta_x}\rangle=\langle\cos(\theta_x)\rangle\in\R$ for all $x \in \tilde{\mc{G}}$.

The following proposition relates the asymptotics of the field $e^{i k \theta_x}$ with $k=1$ to those of the connectivity properties of the clusters of spins in $\{ \pm i \}^c$. 

\begin{Prop}
\label{prop:Villain-k=1}
For each $t_0 > 0$, there is $\delta=\delta(t_0,\deg(x))>0$ such that, if $t(xy)\geq t_0$ for all $y\sim x$, then
$$\delta\leq\frac{\langle \cos(\theta_x)\rangle}{\P(x\stackrel{\{\pm i\}^c}{\longleftrightarrow} \partial)}\leq 1.$$
\end{Prop}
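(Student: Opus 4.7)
I would split the argument into a cluster-wise symmetry, which produces the upper bound, and a Markovian local estimate at $x$, which delivers the lower bound.

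\emph{Step 1 (symmetry and upper bound).} Let $T$ be the involution on extended configurations $\theta\in C_0(\tilde{\mathcal{G}},\mathbb{U})$ that leaves $\theta$ unchanged on every connected component of $\theta^{-1}(\{\pm i\}^c)$ meeting $\partial$ and replaces $\theta$ by $\pi-\theta$ on every other component. Since $\theta=\pm\pi/2$ are fixed by $\theta\mapsto\pi-\theta$, $T(\theta)$ is continuous; moreover $T$ preserves the cluster decomposition, the boundary condition $\theta_{|\partial}=0$, and the event $\{x\leftrightarrow\partial\}$. The edge weight $p_t(\theta_u,\theta_v)$ is invariant under $(\theta_u,\theta_v)\mapsto(\pi-\theta_u,\pi-\theta_v)$, so applying the Markov property cluster by cluster -- conditionally on the cluster decomposition and on the boundary spins $\pm\pi/2$ at the cluster boundaries -- shows that $T$ is measure-preserving for the Villain law. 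On $\{x\not\leftrightarrow\partial\}$ one has $\cos(T(\theta)_x)=-\cos\theta_x$, so symmetrizing gives $\langle\cos\theta_x\,\mathbf{1}_{x\not\leftrightarrow\partial}\rangle=0$ and hence
$$\langle\cos\theta_x\rangle=\langle\cos\theta_x\,\mathbf{1}_{x\leftrightarrow\partial}\rangle\le\P(x\leftrightarrow\partial).$$

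\emph{Step 2 (local Markov reduction for the lower bound).} It remains to show $\E[\cos\theta_x\mid x\leftrightarrow\partial]\ge\delta(t_0,\deg(x))$. On this event $\theta_x\in(-\pi/2,\pi/2)$. Conditioning on the cluster $\mathcal{C}_x$ and on $\theta_{|\partial\mathcal{C}_x}$, the Markov property gives that the law of $\theta$ inside $\mathcal{C}_x$ is an extended Villain model on that cable subsystem, absorbed at $\pm\pi/2$, with boundary data in $\{0,\pm\pi/2\}$. Applying the Markov property at $x$ alone and using the identity $p_t(\theta,\theta')-p_t(\theta,\pi-\theta')=p_t^{[-\pi/2,\pi/2]}(\theta,\theta')$ coming from \eqref{eq:linBMref}, the conditional density of $\theta_x$ on $(-\pi/2,\pi/2)$ factorizes as $\prod_{i=1}^{\deg(x)}G_i(\theta_x)$, where $G_i$ is the Green-function-type weight contributed by the $i$-th half-edge emanating from $x$. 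Each $G_i$ is an absorbed heat kernel $p^{[-\pi/2,\pi/2]}_{s_i}(\theta_x,\beta_i)$ (or an integrated version involving a hitting density), whose time parameter is controlled by $t(xy_i)\ge t_0$.

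\emph{Main obstacle.} The crucial technical input is a uniform two-sided comparison
$$G_i(\theta)\asymp c_i\cdot\cos\theta,\qquad\theta\in(-\pi/2,\pi/2),$$
with constants depending only on $t_0$ and with $c_i$ independent of $\theta$. This follows from the eigenfunction expansion of the Dirichlet Laplacian on $(-\pi/2,\pi/2)$, whose ground state is $\theta\mapsto\cos\theta$: for times bounded below by $t_0$, the sub-dominant spectral contributions are uniformly controlled. Plugging the comparison into $\prod_i G_i$ and normalizing reduces the conditional expectation to a constant multiple of
$$\frac{\int_{-\pi/2}^{\pi/2}(\cos\theta)^{\deg(x)+1}\,d\theta}{\int_{-\pi/2}^{\pi/2}(\cos\theta)^{\deg(x)}\,d\theta},$$
a strictly positive constant depending only on $\deg(x)$. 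The principal difficulty is to obtain the factor-by-factor comparison $G_i\asymp c_i\cos\theta$ uniformly over the (arbitrarily complicated) shapes of the sub-clusters hanging off each neighbor $y_i$; this is exactly where the assumption $t(xy)\ge t_0$ intervenes, through the spectral gap of the Dirichlet Laplacian on $(-\pi/2,\pi/2)$.
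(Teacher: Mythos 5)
Your Step 1 is essentially the paper's argument: the reflection $\theta\mapsto\pi-\theta$ on the components of $\theta^{-1}(\{\pm i\}^c)$ not meeting $\partial$ (equivalently $u\mapsto -\bar u$) is exactly the map $\sigma$ used in the paper, and both you and the paper justify measure preservation via the strong Markov property of the cable-system extension; the conclusion $\langle\cos\theta_x\,\mathbf 1_{x\not\leftrightarrow\partial}\rangle=0$ and the upper bound then follow identically.

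Your Step 2, however, deviates from the paper's lower-bound argument in a way that leaves a genuine gap. You condition on the cluster $\mathcal{C}_x$ together with $\theta_{|\partial\mathcal{C}_x}$, and then claim a factorization $\prod_i G_i(\theta_x)$ for the conditional density of $\theta_x$, with each $G_i$ comparable to $\cos\theta$ with constants depending only on $t_0$ because ``the time parameter is controlled by $t(xy_i)\ge t_0$.'' That last assertion is not true: the cluster boundary on the edge from $x$ towards a neighbor $y_i$ may sit at arbitrarily small cable-distance $a$ from $x$, in which case the effective time parameter of the corresponding absorbed heat kernel/hitting density is $a$, not $t(xy_i)$. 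As $a\to 0$ the factor $G_i$ concentrates near $\theta_x=\pm\pi/2$, and the ratio $G_i(\theta)/\cos\theta$ is neither uniformly bounded above nor below on $(-\pi/2,\pi/2)$. Indeed, on the level of the quantity you are trying to bound, $\E[\cos\theta_x\mid\mathcal{C}_x,\theta_{|\partial\mathcal{C}_x}]$ is \emph{not} bounded below uniformly in the cluster shape: shapes whose boundary pinches near $x$ force $\theta_x$ close to $\pm\pi/2$. So no pointwise-in-$\mathcal{C}_x$ spectral-gap comparison can close the argument; one must average over cluster shapes and use that the bad shapes have small weight.

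The paper sidesteps exactly this issue by \emph{not} conditioning on the cluster shape. It writes $\E[\cos\theta_x\,\mathbf 1_{x\in C}]\ge\tfrac{1}{\sqrt 2}\sum_{\emptyset\neq s\subset N}\P(\theta_x\in(-\tfrac\pi4,\tfrac\pi4),\ x\leftrightarrow s,\ S=s)$, where $S$ is the set of lattice neighbors of $x$ connected to $\partial$ outside the star $\tilde{\mathcal G}_x$, and then conditions only on the lattice-neighbor spins $(\theta_y)_{y\in N}$. At that conditioning level the star $\tilde{\mathcal G}_x$ carries independent Brownian bridges of duration exactly $t(xy)\ge t_0$, so the reduction is to a genuine fixed-time estimate $\P_x\bigl(B_t\in(c,d)\mid B_s\in(a,b)\ \forall s\in[0,t]\bigr)\ge\delta$ uniformly for $t\ge t_0$, which is the estimate your spectral-gap idea is really suited for. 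If you want to salvage your route, replace ``condition on $\mathcal{C}_x$'' by ``condition on $(\theta_y)_{y\in N}$ and on $\{x\leftrightarrow s\}$ within $\tilde{\mathcal G}_x$'' as the paper does; the Dirichlet ground-state comparison then applies to bridges of duration $t(xy_i)\ge t_0$ and the rest of your outline goes through.
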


The proof of the upper bound relies on exact cancellations using the strong Markov property of the extended Villain model and an appropriate reflection, and the lower bound follows from a local resampling of the spins nearby the site $x$, on the random cluster event.

\begin{proof}
Let $C\subset\tilde{\mc G}$ denote the cluster
$$C=\{y\in\tilde{\mc G}: y\stackrel{\{\pm i\}^c}{\longleftrightarrow} \partial\}$$
a random open subset of $\tilde{\mc G}$. Consider the transformation $\sigma:\Omega=C_0(\tilde{\mc G},\U)\rightarrow\Omega$ given by:
\begin{equation*}
\left\{\begin{array}{lll}
\sigma(u)_y&=u_y&{\rm if\ }y\in C\\
\sigma(u)_y&=-\overline{u_y}&{\rm if\ }y\notin C
\end{array}\right.
\end{equation*}
By the strong Markov property, $\sigma$ is a measure-preserving transformation (this can be seen as a reflection principle). Consequently,
$$\langle\cos(\theta_x)\ind_{x\notin C}\rangle=\langle\cos(\pi-\theta_x)\ind_{x\notin C}\rangle=0$$
which gives already the claimed upper bound. See the left part of Figure \ref{fig:villain-reflection} for an illustration.

\begin{figure}
\centering
\includegraphics[scale=1]{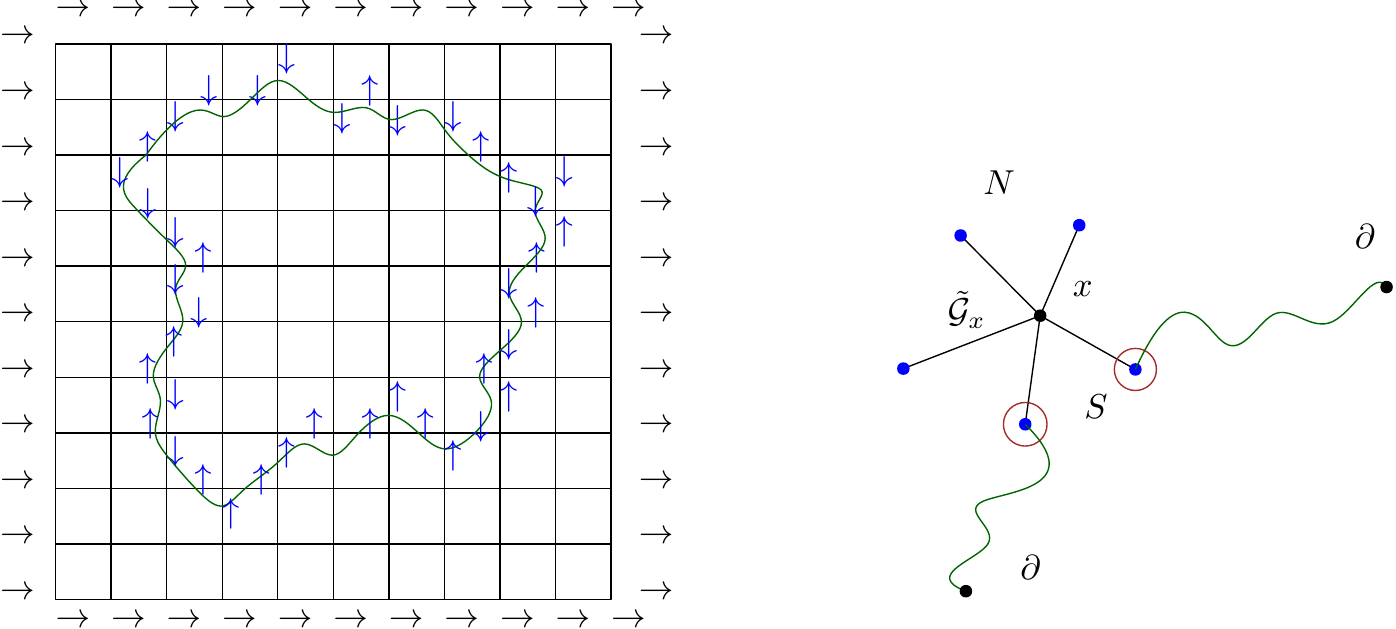}
\caption{Left figure: The inner boundary of the random cluster $C$ is in green and is obtained as an interpolation of the blue spins $\uparrow \downarrow$. Each spin inside this green loop is reflected over  $(Oy)$ by the map $\sigma$. Right figure: a 1-neighborhood of a vertex $x$ in ${\mc G}$; $y \in N$ are the blue disks, the star metric graph $\tilde{\mc{G}_x}$ has five branches, and $y\in S$ are further surrounded by red circles.}
\label{fig:villain-reflection}
\end{figure}

On the event $x\in C$, $\theta_x\in (-\frac\pi 2,\frac\pi 2)$ (because $\theta=0$ on $\partial$), and $\cos(\theta_x)>0$; we have to show that $\E(\cos(\theta_x)|x\in C)\geq \delta>0$.  We will prove this by using the local randomness near the site $x$.

Let $\tilde {\mc G}_x\subset\tilde {\mc G}$ be the union of segments connected to $x$, a star metric graph. Let $S$ be the set of neighbors of $x$ (in ${\mc G}$) that are connected to the boundary by a path outside of $\tilde{\mc G}_x$. Then $x\in C$ if and only if $S$ is non-empty, and $x$ is connected to $S$ in $\tilde {\mc G}_x$. See the right part of Figure \ref{fig:villain-reflection}.

Let $N$ be the set of neighbors of $x$.  Using that $\cos(\theta_x) \geq 0$ when $\{ x \in C \}$ occurs, we can bound from below
$$
\E( \cos(\theta_x) 1_{x \in C})  \geq \frac{1}{\sqrt{2}} \sum_{N \supset  s \neq \emptyset} \mb{P} (\theta_x \in (- \frac{\pi}{4}, \frac{\pi}{4}), x \leftrightarrow s, S=s ).
$$
Furthermore, since the conditional laws of the spins in $\tilde{\mc{G}}_x$ and the ones in $\tilde{\mc{G}} \backslash \tilde{\mc{G}}_x$ are independent given $(\theta_y : y \in N)$, we have
\begin{align*}
\mb{P} (\theta_x \in (- \frac{\pi}{4}, \frac{\pi}{4}), x \leftrightarrow s, S=s | (\theta_y)_{y \in N} ) & = \mb{P}_{\tilde{\mc G}_x} (\theta_x \in (- \frac{\pi}{4}, \frac{\pi}{4}), x \leftrightarrow s | (\theta_y)_{y \in N} ) \mb{P} ( S=s | (\theta_y)_{y \in N} )  
\end{align*}
Then we use 
$$
\mb{P}_{\tilde{\mc G}_x} (\theta_x \in (- \frac{\pi}{4}, \frac{\pi}{4}), x \leftrightarrow s | (\theta_y)_{y \in N} ) = \mb{P}_{\tilde{\mc G}_x} (\theta_x \in (- \frac{\pi}{4}, \frac{\pi}{4})| (\theta_y)_{y \in N},  x \leftrightarrow s  ) \mb{P}_{\tilde{\mc G}_x} (x \leftrightarrow s | (\theta_y)_{y \in N} )
$$
Hence, since
$$
\mb{P}(x\in C, S=s) = \E  \left( \P_{\tilde{\mc G}_x}(x\leftrightarrow s|(\theta_y)_{y\in N})\P(S=s|(\theta_y)_{y\in N})  \right),
$$
we just need a local estimate in $\tilde{\mc G}_x$ of the form
$$
\P_{\tilde{\mc G}_x}(\theta_x\in (-\frac \pi 4,\frac\pi 4)|x\leftrightarrow s,(\theta_y)_{y\in N})\geq\delta,
$$
uniformly in $(\theta_y)_{y\in N}$ (with $\theta_y\in (-\frac\pi 2,\frac\pi 2)$ if $y\in s$).

This boils down to standard estimates of the form:
$$\P_x(B_t\in (c,d)| \forall s\in [0,t]{\rm , }B_s\in (a,b))\geq \delta>0$$
uniformly in $t\geq t_0>0$ and $x\in (a,b)$, where $a<c<d<b$ and $B$ is standard Brownian motion (started from $x$ under $\P_x$). This can be e.g. read off explicit formulae of the form \eqref{eq:linBMref}.
\end{proof}

\paragraph{Remark.} With free boundary conditions or on the torus, for $x,y\in{\mc G}$, one can fix $\theta_y=0$ using global rotational invariance and set $\partial=\{y\}$ to obtain
$$
\langle e^{i(\theta_x-\theta_y)}\rangle\asymp \P(x\stackrel{\{\pm ie^{i\theta_y}\}^c}{\longleftrightarrow} y).
$$ 

\medskip

Proposition \ref{prop:Villain-k=1} dealt with the case $k=1$ for which $e^{i k \theta_x } = \sigma_x$. In the following proposition, we provide a geometric interpretation to $e^{i 2 \theta_x} = \sigma_x^2$ in which clusters of spins in the half-circles $(\xi, -\xi)$ and $(\bar{\xi}, -\bar{\xi})$ arise where $\xi:=e^{i\pi/4}$. The set-up  is as before. 
\begin{Prop}
\label{prop:Villain-k=2}
For each $t_0 > 0$, there is  $\delta=\delta(t_0,\deg(x))>0$ such that, if $t(xy)\geq t_0$ for all $y\sim x$, then
$$
\delta\leq\frac{\langle \cos(2\theta_x)\rangle}{\P(x\stackrel{\{\pm \xi\}^c}{\longleftrightarrow} \partial, x\stackrel{\{\pm \bar\xi\}^c}{\longleftrightarrow} \partial)}\leq 1.
$$
\end{Prop}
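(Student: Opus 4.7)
The plan is to mirror the two-step structure of the proof of Proposition \ref{prop:Villain-k=1}: the upper bound will come from measure-preserving reflections applied on the complement of suitable random clusters, and the lower bound from a local resampling near $x$. The new ingredient is that for $k=2$ one needs two coupled reflections, each preserving both cluster structures simultaneously. Introduce the random clusters $C_1 = \{y \in \tilde{\mc{G}} : y \stackrel{\{\pm \xi\}^c}{\longleftrightarrow} \partial\}$ and $C_2 = \{y \in \tilde{\mc{G}} : y \stackrel{\{\pm \bar\xi\}^c}{\longleftrightarrow} \partial\}$. Since $\theta\equiv 0$ on $\partial$ and $\theta$ is continuous along each connecting path, on $\{x \in C_1 \cap C_2\}$ one has $\theta_x \in (-3\pi/4,\pi/4) \cap (-\pi/4,3\pi/4) = (-\pi/4,\pi/4)$, hence $\cos(2\theta_x) \in (0,1]$, which already gives $\langle \cos(2\theta_x) \ind_{x\in C_1\cap C_2}\rangle \leq \P(x\in C_1\cap C_2)$. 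For the upper bound it remains to prove that the complementary contribution vanishes, which I split as $\{x\notin C_1\} \sqcup \{x\in C_1,\, x\notin C_2\}$.

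Introduce the two isometries $r_1(\theta)=\pi/2-\theta$ and $r_2(\theta)=-\pi/2-\theta$ of $\U$, namely the reflections across the diameters through $\pm\xi$ and through $\pm\bar\xi$, respectively. A direct check shows that $r_1$ fixes $\pm\xi$ and swaps $\pm\bar\xi$, while $r_2$ fixes $\pm\bar\xi$ and swaps $\pm\xi$; each $r_i$ therefore permutes both sets $\{\pm\xi\}$ and $\{\pm\bar\xi\}$, so it preserves the cluster structures of both $C_1$ and $C_2$. Also, $\cos(2 r_i(\theta)) = -\cos(2\theta)$ for both $i$. Applying $r_1$ on $C_1^c$ (identity on $C_1$) is measure-preserving by the strong Markov property of the extended Villain model: conditional on $(C_1,\theta|_{C_1})$, the field on $C_1^c$ is Villain with boundary values in $\{\pm\xi\}$, which $r_1$ fixes pointwise; this yields $\langle \cos(2\theta_x)\ind_{x\notin C_1}\rangle = 0$. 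Applying $r_2$ on $C_2^c$ on the event $\{x\in C_1,\, x\notin C_2\}$ is measure-preserving by the same argument; the event $\{x\notin C_2\}$ is trivially preserved since $r_2$ fixes $\pm\bar\xi$, and the crucial point is that $\{x\in C_1\}$ is \emph{also} preserved, because $r_2$ permutes $\{\pm\xi\}$ as a set, so connectivity in $\U\setminus\{\pm\xi\}$ is unchanged. This gives $\langle \cos(2\theta_x)\ind_{x\in C_1,\, x\notin C_2}\rangle=0$, and summing the two contributions completes the upper bound.

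For the lower bound I follow the local resampling scheme of Proposition \ref{prop:Villain-k=1}. Using $\cos(2\theta_x)\geq 1/\sqrt 2$ when $\theta_x\in(-\pi/8,\pi/8)$, it suffices to bound $\P(\theta_x\in(-\pi/8,\pi/8),\, x\in C_1\cap C_2)$ from below by a constant multiple of $\P(x\in C_1\cap C_2)$. Decomposing over the pair $(S_1,S_2)$ of subsets of neighbors of $x$ realizing the two external connections outside the star graph $\tilde{\mc{G}}_x$ and conditioning on the external field, the edges of $\tilde{\mc{G}}_x$ carry independent $\U$-valued Brownian bridges, and the estimate reduces to a uniform lower bound on the conditional probability that $\theta_x\in(-\pi/8,\pi/8)$ given the existence of one bridge from $x$ to some $y\in S_1$ avoiding $\pm\xi$ and one bridge to some $y\in S_2$ avoiding $\pm\bar\xi$; this is a standard Brownian bridge computation that can be read off the reflection series \eqref{eq:linBMref}. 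The main obstacle of the whole argument is the joint compatibility of the two reflections with the two clusters: one must verify that $r_2$, applied only on $C_2^c$, preserves the $C_1$-event, and this is exactly where the symmetry of the four-point configuration $\{\pm\xi,\pm\bar\xi\}$---the fact that each of $r_1,r_2$ permutes both blocking pairs---is essential.
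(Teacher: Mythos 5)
Your proof is correct and follows essentially the same route as the paper: introduce the clusters $C_1,C_2$, use the two reflections (equivalently the paper's $\sigma_1,\sigma_2$ with $R_i(z)=\xi^{\pm 2}\bar z$) together with the observation that each reflection preserves both blocking sets $\{\pm\xi\}$ and $\{\pm\bar\xi\}$ as sets, so that the ``bad'' events cancel exactly, then run the local resampling estimate for the lower bound. The only cosmetic difference is that you split the complement of $\{x\in C_1\cap C_2\}$ into two cases ($\{x\notin C_1\}$ and $\{x\in C_1,\,x\notin C_2\}$) rather than the paper's three, which is a harmless streamlining since $\sigma_1$ handles $\{x\notin C_1\}$ in one stroke regardless of the $C_2$-status of $x$.
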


The scheme of the proof is similar to that of Proposition \ref{prop:Villain-k=1}. However, finding the appropriate random cluster connection event in order to obtain exact cancellations via reflections becomes more involved.

\begin{figure}
\centering
\includegraphics[scale=1]{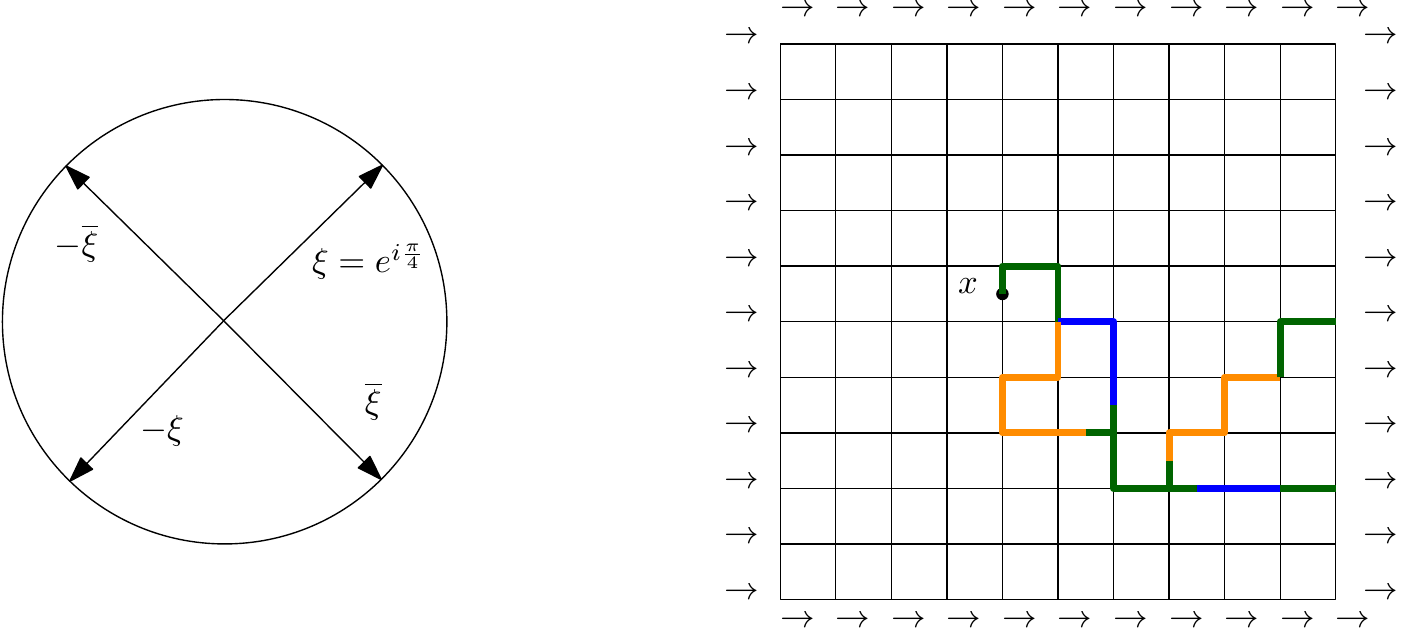}
\caption{Left figure: the maps $z \mapsto \xi^2 \overline{z}$ and  $z \mapsto \overline{\xi}^2 \overline{z}$  are  the reflection across $(\pm \xi)$ and  $(\pm \overline{\xi})$ involved in the definition of $\sigma_1$ and $\sigma_2$.   Right figure: The point $x$ belongs to the intersection of the clusters $C_1$ and $C_2$. The colors blue, green, and orange denote spins with values in the south, east and north quadrant respectively. The path in green contains spins in $[-\pi/4, \pi/4]$ and then splits in points in $C_1$ (in blue) and $C_2$ (in orange).}
\label{fig:k=2}
\end{figure}

\begin{proof}
Consider the clusters
\begin{align*}
C_1&=\{y\in\tilde{\mc G}: y\stackrel{\{\pm \xi\}^c}{\longleftrightarrow} \partial\}\\
C_2&=\{y\in\tilde{\mc G}: y\stackrel{\{\pm \bar \xi\}^c}{\longleftrightarrow} \partial\}
\end{align*}
and the measure-preserving transformations $\sigma_1,\sigma_2$:
\begin{align*}
\sigma_1(u)_x=u_x{\rm\ if\ }u\in C_1{\rm,\ \ }&\sigma_1(u)_x=\xi^2\overline{u_x}{\rm\ otherwise}\\
\sigma_2(u)_x=u_x{\rm\ if\ }u\in C_2{\rm,\ \ }&\sigma_2(u)_x={\bar \xi}^2\overline{u_x}{\rm\ otherwise}
\end{align*}
Remark that the set $\{x:u_x\in\{\pm\xi\}\}$ is preserved both by $\sigma_1$ and $\sigma_2$ (and similarly for $\{x:u_x\in\{\pm\bar\xi\}\}$); in particular both $\sigma_1$ and $\sigma_2$ preserve the events $x\in C_1$, $x\in C_2$ and their complements. Moreover, if $x\notin C_i$, $\Re((\sigma_i(u)_x)^2)=-\Re(u_x^2)$. It follows that
$$\langle\cos(2\theta_x)\ind_A\rangle=0$$
if $A$ is one of three exclusive connection events: $x\notin C_1, x\in C_2$; or $x\in C_1,x\notin C_2$; or $x\notin C_1,x\notin C_2$. So
$$
\langle\cos(2\theta_x)\rangle=\langle\cos(2\theta_x)\ind_{x\in C_1\cap C_2}\rangle.
$$
Moreover, on the event $\{x\in C_1\cap C_2\}$, $\theta_x\in (-\frac\pi 4,\frac \pi 4)$ and $\cos(2\theta_x)>0$. For the lower bound one argues as in the proof of Proposition \ref{prop:Villain-k=1}.
\end{proof}

\paragraph{Cluster-swapping algorithm.}

The type of conditional spin-flip measure-preserving transformations also provides a cluster-swapping algorithm for sampling the Villain model, akin to the classical Swendsen-Wang algorithm \cite{SW-algorithm} for the Ising model. This is closely related to the Wolff algorithm \cite{Wolff}

The underlying graph is ${\mc G}$ (typically a box in $\{1,\dots, N\}^d\subset\Z^d$, or a box $(\Z/N\Z)^d$). The boundary conditions are: $\theta=0$ on the boundary $\partial$ (e.g. for a box), or free (e.g. for a torus).

One step of the algorithm consists of:
\begin{enumerate}
\item Pick an angle $\nu$ uniformly in $[0,2\pi]$.
\item For each edge $(xy)\in E$, declare the edge open with probability:
\begin{equation*}
\left\{\begin{array}{ll}
g_t(\theta_x-\nu,\theta_y-\nu)&{\rm\ if }\cos(\theta_x-\nu)>0{\rm\ and\ }\cos(\theta_y-\nu)>0\\
g_t(\pi+(\theta_x-\nu),\pi+(\theta_y-\nu))&{\rm\ if }\cos(\theta_x-\nu)<0{\rm\ and\ }\cos(\theta_y-\nu)<0\\
0&{\rm\ otherwise}
\end{array}\right.
\end{equation*}
\item For each cluster $C$ (connected component of open edges):
\begin{itemize}
\item (case with boundary) if $C$ is connected to the boundary, do nothing
\item otherwise, with probability 1/2, do nothing; with probability 1/2,  for all $x\in C$ replace $\theta_x$ with $2\nu+\pi-\theta_x\mod 2\pi$.
\end{itemize}
\end{enumerate}
Repeat until (close enough to) equilibrium. 

Remark that the step is equivalent to the following. Sample $\tilde\sigma$, the extension of $\sigma=e^{i\theta}$ to the cable system, conditionally on $\sigma$; resample $\tilde\sigma$ conditionally on $\Im(e^{-i\nu}\tilde\sigma)$; restrict it back to ${\mc G}$. In other words, the role of extended configuration played by the Edwards-Sokal coupling for the Swendsen-Wang algorithm is played here by the cable system extension.

For $\theta_1,\theta_2\in (-\frac\pi 2,\frac\pi 2)$, 
$$g_t(\theta_1,\theta_2)=\frac{f_t(\theta_1-\theta_2)-f_t(\theta_1+\theta_2-\pi)}{f_t(\theta_1-\theta_2)}$$
where $f_t(x)=\sum_{n\in\Z}\exp(-\frac{1}{2t}(x+2n\pi)^2)$, an even, $2\pi$-periodic function. If $t$ (a fixed temperature parameter) is small ($t<2\pi$) (resp. large, $t\geq 2\pi$), the LHS of \eqref{eq:low-high} (resp. RHS of \eqref{eq:low-high}) converges faster. In the first case, with $x\in [-\pi,\pi]$, a truncation of the series to $n \in \{-2, \dots, 2\}$ gives a good approximation; in the second case, a truncation to $n \in \{1, 2\}$.

\section{Generalization to the O($2$) model}

\paragraph{Definition of the model and examples.} We consider a spin model on a graph $\mc{G}$ where spins $(u_x)_{x \in \mc{G}}$ take values on the unit circle with interactions given by
\begin{equation}
\label{eq:def-model}
\mb{P}(du)  \propto \prod_{(xy) \in E} w(u_x, u_y)\prod_xd\theta_x
\end{equation}
Here $w(u,v) \in (0,\infty)$ and $u_x=e^{i\theta_x}$. We assume that $w$ is invariant under rotations and reflections. Typically, we want to favor configurations of spins that are aligned so it is natural to assume that $w(\cdot, \cdot)$ is continuous and that $\theta \mapsto w(1,e^{i \theta})$ is increasing on $[-\pi,0]$ and decreasing on $[0,\pi]$.

\medskip

Let us mention some examples: when $w$ is the heat kernel on the unit circle \eqref{eq:heat-kernel}, this corresponds to the Villain model. Furthermore, taking $w(u_x, u_y) = e^{\beta u_x \cdot u_y }$ gives the classical XY model. More generally, this covers the O($2$) model (see, e.g. \cite[Section 9]{FV-book}).

\medskip

In what follows, we denote by $R$ the reflection w.r.t the imaginary axis (for $z \in \mb{C}$, $R(z) = - \bar{z}$). We will be interested in clusters of spins that lie in the same half circles $\mb{U}_L := e^{i [-\pi/2,\pi/2]}$, $\mb{U}_R := e^{i [\pi/2,3\pi/2]}$, built by open/closed bonds. We will add extra randomness to this model by conditionally sampling open/closed bonds independently of each others, given the spins on $\mc{G}$. We  fix the boundary conditions to be $u_x = 1$ for $x \in \partial \mc{G}$. 

\medskip

In a context of numerical simulations to estimate physical observables of the critical Ising and XY models, Wolff \cite{Wolff} introduced an algorithm that revolves around cluster reflections that generalize the Swendsen-Wang algorithm for Potts spins \cite{SW-algorithm}. The algorithm considers $O(n)$ spins models (for which spins take values in $S^{n-1}$ and the interaction is given by $e^{\beta \sigma_x \cdot \sigma_y}$). On a lattice with periodic boundary conditions, a uniform reflection $R$ and a uniform site $x$ are chosen. The spin at $x$ is reflected via $R$ and so is its cluster, made of bonds open with explicit probabilities given the value of the spins (see \cite[Equation (5)]{Wolff}). The construction of the clusters below (from the next paragraph up to Lemma \ref{lem:preserve-xy-1}) provide an extension of Wolff's considerations.

\paragraph{Cluster distribution and swapping map $\sigma_x$.}

We motivate here the choice of extended model $(u,e)$ where $u$ refers to the above spin model and $e$ refers to a collection of bonds $e = (e_{xy})_{(xy) \in E}$ we will consider. We say that $(xy)$ is open if $e_{xy} = 1$ and close if $e_{xy} = 0$. 

 We look for a probability distribution on bonds $p(u_x,u_y) = \mb{P}( e_{xy} =1)$ which satisfies $p(u_x,u_y) = 0$ if $u_x$, $u_y$ lie in the two different sides of $\mb{U}$, $p(R(u), R(v)) = p(u,v)$ for every $u,v$ and such that the following maps $\bar{\sigma}_z : (u,e) \mapsto (\sigma_z(u,e),e)$ are measure-preserving for any point $z \in \mc{G}$: if $z$ is connected to the boundary, then $\sigma_z(u,e) = u$. If this is not the case, then $\sigma_z(u,e)_y = R(u_y)$ for each $y \in C_z$, the cluster containing $z$, and $\sigma_z(u,e)_y = u_y$ for every $y \in \mc{G} \setminus C_z$. This transformation doesn't affect the clusters but only the spins.
 
Then, we will consider the random cluster model  sampled as follows: conditionally on $(u_x)_{x\in \mc{G}}$, for each $(xy)\in E$, the edge is open in the random cluster with probability $p(u_x,u_y)$ independently of other edges.
 
\medskip

We denote by $P$ the distribution on spins $u$ and bonds $e$ obtained by the combination of $w$ and $p$. Clearly, if $z$ is connected to the boundary in $u$, then $dP(\bar{\sigma}_z(u,e))/dP(u,e) = 1$. If this is not the case, then
\begin{equation}
\label{eq:invariance}
\frac{dP(\bar{\sigma}_z(u,e))}{dP(u,e)} = \prod_{x \in C_z, y \in \mc{G} \setminus C_z, x \sim y} \frac{w(R(u_x),u_y)}{w(u_x,u_y)} \frac{1-p(R(u_x),u_y)}{1-p(u_x,u_y)}.
\end{equation}
If $u_x, u_y$ are in opposite sides, $p(u_x,u_y) = 0$ and
$$
\frac{w(R(u_x),u_y)}{w(u_x,u_y)} \frac{1-p(R(u_x),u_y)}{1-p(u_x,u_y)}  = 1 \Rightarrow p(R(u_x),u_y) = 1 - \frac{w(u_x,u_y)}{w(R(u_x),u_y)},
$$
which is in $(0,1)$ from the assumptions on $w$ (positivity, finiteness and monotonicity). The other case is symmetric: if $u_x, u_y$ are in the same side, $p(R(u_x),u_y) = 0$ and
$$
\frac{w(R(u_x),u_y)}{w(u_x,u_y)} \frac{1-p(R(u_x),u_y)}{1-p(u_x,u_y)}  = 1 \Rightarrow p(u_x,u_y) = 1 - \frac{w(R(u_x),u_y)}{w(u_x,u_y)}.
$$
These two conditions are the same given that $R^2 = \mathrm{Id}$. 

This choice of probability for opening/closing bonds, i.e. 
$$
p(u_x,u_y) = ( 1 - \frac{w(R(u_x),u_y)}{w(u_x,u_y)} ) 1_{u_x, u_y \text{ are in the same side of } \mb{U}},
$$
gives $\frac{dP(\bar{\sigma}_z(u,e))}{dP(u,e)} =1$ so that, for any function $F : \mb{U}^{\mc{G}} \times \{0,1\}^{E} \to \mb{R}$,
$$
\E( F(\bar{\sigma}_z(u,e))) =  \int F(\bar{\sigma}_z(u,e)) dP(u,e) = \int F(\bar{\sigma}_z(u,e)) dP(\sigma_z(u,e))  = \E( F(u,e)).
$$
We record this in the following lemma. Also, we note that \eqref{eq:proba-bond} below is a generalization of \eqref{eq:bond-proba-Villain} to general weights.

\begin{Lem}
\label{lem:preserve-xy-1}
Consider the extended model with random clusters $(u,e)$ where $u=(u_x)_{x \in \mc{G}}$ is as in \eqref{eq:def-model} and $e = (e_{xy})_{(xy) \in E} \subset \{0,1\}^E$ is obtained by opening edges (i.e., setting $e_{xy} =1$) independently of other edges given the spins with probability  
\begin{equation}
\label{eq:proba-bond}
p(u_x,u_y) = ( 1 - \frac{w(R(u_x),u_y)}{w(u_x,u_y)} ) 1_{u_x, u_y \text{ are in the same side of } \mb{U}}.
\end{equation}
Then, for every $z \in \mc{G}$, $(u,e) \mapsto \bar{\sigma}_z(u,e)$ is measure preserving.
\end{Lem}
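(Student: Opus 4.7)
The plan is to compute directly the Radon--Nikodym derivative $dP(\bar\sigma_z(u,e))/dP(u,e)$ edge by edge, and to check that the specific choice \eqref{eq:proba-bond} of $p$ forces it to be identically $1$.

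First I would observe that $\bar\sigma_z$ leaves the bond configuration $e$ untouched and acts on the spin coordinates by the reflection $R$ on vertices of $C_z$ and by the identity elsewhere; in particular the cluster $C_z$, being a connected component in $e$ alone, is preserved by the map. Since $R$ is an isometry of $\mb{U}$, no Jacobian factor arises from the reference measure $\prod_x d\theta_x$. Because $P$ factors as a product over edges of $w(u_x,u_y)$ times a Bernoulli factor $p(u_x,u_y)^{e_{xy}}(1-p(u_x,u_y))^{1-e_{xy}}$, the full density ratio factors edge by edge, which is what makes a local computation possible.

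Next I would sort edges into three categories. Edges with both endpoints in $C_z$ have both spins reflected; the reflection invariance of $w$ gives $w(R(u),R(v))=w(u,v)$, and the analogous identity $p(R(u),R(v))=p(u,v)$ follows from \eqref{eq:proba-bond} because $R$ permutes the two halves of $\mb{U}$ involutively; so their contribution is $1$. Edges with both endpoints outside $C_z$ contribute $1$ trivially. The only remaining edges are the boundary edges $(xy)$ with $x\in C_z$ and $y\notin C_z$; these must be closed, since $e_{xy}=1$ would place $y$ in $C_z$ by definition of a cluster.

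The core of the argument, and the step I expect to require the most care although it is really just algebra from the defining formula for $p$, is to verify that for each such boundary edge
\[
\frac{w(R(u_x),u_y)}{w(u_x,u_y)}\cdot\frac{1-p(R(u_x),u_y)}{1-p(u_x,u_y)}=1.
\]
Here I would split on whether $u_x,u_y$ lie in the same half of $\mb{U}$ or in opposite halves. In the same-half case, \eqref{eq:proba-bond} gives $p(u_x,u_y)=1-w(R(u_x),u_y)/w(u_x,u_y)$ while reflecting $u_x$ moves the pair to opposite halves, so $p(R(u_x),u_y)=0$; substituting yields $1$. The opposite-half case is symmetric using $R^2=\mathrm{Id}$ and the identity $w(R(u),v)=w(u,R(v))$ that comes from combined rotation and reflection invariance of $w$. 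Multiplying these trivial and unit local factors over all edges gives density ratio $1$, which by the change-of-variables formula is exactly the measure-preservation of $\bar\sigma_z$.
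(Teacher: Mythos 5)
Your proof is correct and follows essentially the same route as the paper's: the paper computes the Radon--Nikodym derivative of $\bar\sigma_z$ as a product over boundary edges of $C_z$ (see \eqref{eq:invariance}) and solves for the $p$ that makes it identically one, whereas you take the formula \eqref{eq:proba-bond} as given and verify the ratio is one, splitting the same way into internal, external, and boundary edges and into same-side versus opposite-side endpoints. The only additions you make are cosmetic: you spell out explicitly that the internal edges of $C_z$ contribute trivially via $w(R(u),R(v))=w(u,v)$ and $p(R(u),R(v))=p(u,v)$, that the boundary edges must be closed, and that $R$ being an isometry produces no Jacobian, all of which the paper leaves implicit.
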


We observe that the clusters considered in this lemma, conditionally on the spins, coincide with those of Chayes in \cite{Chayes-1}. Indeed, with his notation $(\cos \theta_x, \sin \theta_x) = (a_x \sigma_x, b_x \tau_x)$ with $\sigma_x, \tau_x \in \{-1,1\}$. When $\cos(\theta_x) \cos(\theta_y) > 0$,
$$
\frac{w(R u_x, u_y)}{w( u_x, u_y)} = \frac{ e^{\beta \cos(\pi-\theta_x-\theta_y)}}{e^{\beta \cos(\theta_x-\theta_y)}} = e^{- 2 \beta \cos(\theta_x) \cos(\theta_y)} = e^{- 2 \beta a_x a_y}.
$$
In his paper, this term corresponds  to the FK expansion of the Ising model associated to the $\sigma_x$'s (see equation (7) in \cite{Chayes-1}). These clusters are also included in the general exposition of Cohen-Alloro and Peled, see the paragraph on spins $O(n)$ models in Section 2 of \cite{Omri-Peled}.

\smallskip

With the same assumptions as above, we have
\begin{Prop}
\label{prop:XY-k=1}
There is $\delta=\delta(\deg(x))>0$ such that,
$$
\delta\leq\frac{\langle \cos(\theta_x)\rangle}{\mb{P}(x \stackrel{C}{\longleftrightarrow} \partial)}\leq 1.
$$
\end{Prop}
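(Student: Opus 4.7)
My plan is to adapt the argument from Proposition \ref{prop:Villain-k=1}, replacing the cable-system strong Markov property by the measure-preserving swap map $\bar\sigma_x$ from Lemma \ref{lem:preserve-xy-1}.

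\textbf{Upper bound.} The map $\bar\sigma_x$ acts trivially on $\{x \leftrightarrow \partial\}$ and, on $\{x \not\leftrightarrow \partial\}$, replaces $u_y$ by $R(u_y) = -\bar u_y$ for every $y$ in the cluster of $x$; in particular it sends $\cos\theta_x$ to $-\cos\theta_x$. Since $\bar\sigma_x$ preserves $\mb P$ and each of $\{x \leftrightarrow \partial\}$, $\{x \not\leftrightarrow \partial\}$, one obtains $\langle \cos\theta_x \ind_{x \not\leftrightarrow \partial}\rangle = 0$, and hence $\langle\cos\theta_x\rangle = \langle\cos\theta_x \ind_{x \leftrightarrow \partial}\rangle$. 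Any path realizing $\{x \leftrightarrow \partial\}$ consists of open bonds, so all its spins lie in a common half-circle; since the boundary values are $u \equiv 1$, this half-circle is $\mb U_L$, so $\theta_x \in (-\pi/2, \pi/2)$ and $\cos\theta_x \in (0, 1]$. The bound $\langle\cos\theta_x\rangle \leq \mb P(x \leftrightarrow \partial)$ follows.

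\textbf{Lower bound.} Let $N$ be the neighborhood of $x$ in $\mc G$, and $S \subset N$ the (random) subset of $y \in N$ that are connected to $\partial$ in the graph with all edges incident to $x$ removed; then $\{x \leftrightarrow \partial\} = \{S \neq \emptyset\} \cap \{\exists y \in S:\, e_{xy} = 1\}$. Conditioning on the $\sigma$-algebra $\mc F$ generated by the spins $(\theta_y)_{y \neq x}$ and the bonds not incident to $x$, the set $S$ is $\mc F$-measurable; the conditional density of $\theta_x$ on $\mb U$ is proportional to $\prod_{y \in N} w(u_x, u_y)$; and, given $\theta_x$ and $\mc F$, the bonds at $x$ are independent Bernoulli of parameter $p(u_x, u_y)$. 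Compactness of $\mb U$ together with continuity and strict positivity of $w$ yield two-sided bounds on this conditional density that depend only on $w$ and $\deg(x)$. It therefore suffices to establish the deterministic inequality
\[
\int_{-\pi/2}^{\pi/2} \cos(\theta)\Bigl(1 - \prod_{y \in s}\bigl(1 - p(e^{i\theta}, u_y)\bigr)\Bigr)d\theta \;\geq\; \delta' \int_{-\pi/2}^{\pi/2} \Bigl(1 - \prod_{y \in s}\bigl(1 - p(e^{i\theta}, u_y)\bigr)\Bigr)d\theta,
\]
uniformly over nonempty $s$ with $|s| \leq \deg(x)$ and over $(u_y)_{y \in s}$ with every $u_y \in \mb U_L$, with $\delta' = \delta'(w, \deg(x)) > 0$. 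Combining this with the density bound and summing the resulting inequality $\E[\cos\theta_x \ind_{x \in C} \mid \mc F] \geq \delta\, \E[\ind_{x \in C} \mid \mc F]$ over the values of $S$ gives $\langle \cos\theta_x \ind_{x \in C}\rangle \geq \delta\, \mb P(x \in C)$.

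\textbf{Main obstacle.} The deterministic inequality is the crux. The integrand is continuous on $[-\pi/2, \pi/2]$ and vanishes at the endpoints (since $R$ fixes $\pm i$, so $p(\pm i, u_y) = 0$) and is strictly positive in the interior. The monotonicity and continuity of $w$, combined with $\cos\theta$ being bounded below on any closed sub-interval of $(-\pi/2, \pi/2)$, produce the required lower bound. The most delicate configuration is when every $u_y$ with $y \in s$ is clustered near $\pm i$: both integrals are then small, but a quantitative use of the regularity of $w$ near the imaginary axis shows their ratio remains controlled by a constant depending only on $w$ and $\deg(x)$, which yields $\delta'$ and completes the argument.
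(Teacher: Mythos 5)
Your proof follows essentially the same approach as the paper's: the upper bound via exact cancellation under the measure-preserving swap $\bar\sigma_x$ of Lemma \ref{lem:preserve-xy-1}, and the lower bound via conditioning on the neighbourhood data and reducing to a uniform local estimate for the conditional law of $\theta_x$ on the star around $x$. The one step you flag as the crux but do not actually carry out — controlling the ratio when the relevant $u_y$ degenerate to $\pm i$, where both the numerator and the denominator of your deterministic inequality vanish — is exactly where the paper does real work: it computes the limit of the ratio as $u_y\to i$ by differentiating $w(u,ie^{ih})$ at $h=0$ and uses the strict monotonicity of $w$ to conclude the limit is strictly positive, which then gives uniformity by continuity and compactness. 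Without making that computation (or some equivalent quantitative statement) you have only reduced the proposition to an unproved claim, so you should supply that calculation to close the argument.
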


This is a generalization of Proposition \ref{prop:Villain-k=1}. However, the strong Markov property  of the extended Villain model on edges is lost and we instead rely on the swapping map of the previous lemma.

\begin{proof}
We start with the upper bound.  We use the measure preserving map $\bar{\sigma}_x$ (Lemma  \ref{lem:preserve-xy-1}) to obtain 
$$
\E( \Re(u_x) 1_{C_x \cap \partial \mc{G} = \emptyset}) = \E( - \Re(\overline{u_x}) 1_{C_x \cap \partial \mc{G} = \emptyset})  = 0.
$$
Here, $C_x \cap \partial \mc{G} = \emptyset$ is equivalent to $x \overset{C}{\nleftrightarrow} \partial$ so the spin $u_x$ is reflected. In combination with $\theta_x \overset{(d)}{=} \overline{\theta_x}$, the upper bound follows.

For the lower bound, the same scheme as in the proof of Proposition \ref{prop:Villain-k=1} applies (instead of using the local randomness in the metric graph $\tilde{\mc{G}}_x$ we use  the randomness of the adjacent bonds). This time, the result boils down to a lower bound on 
$$
\mb{P} (\theta_x \in [-\pi/4, \pi/4] ~ | ~ x \leftrightarrow s,  (\theta_y)_{y \in N}).
$$

Here, instead of relying on Brownian motion estimates, this is based on the following. Suppose the graph has only one edge $(xy)$ and $u_y$ is fixed in $\mb{U}_R$.  We want a lower bound on
$$
\mb{P} ( u_x \in e^{i [-\pi/4, \pi/4]} ~ |~  (xy) \text{ is open }, u_y ),
$$
which is uniform in $u_y \in e^{i [-\pi/2,\pi/2]}$. This follows from the expression (Bayes formula)
$$
\mb{P} ( d u_x  ~ |~  (xy) \text{ is open }, u_y ) = \frac{w(u_x, u_y) p(u_x,u_y)}{\int_{\mb{U}_R} w(v,u_y) p(v,u_y) \lambda(dv)} du_x,
$$
where $\lambda$ is the Lebesgue measure on the unit circle $\mb{U}$. This implies, by integrating the above equation and given the explicit expression of $p(u_x,u_y)$ in \eqref{eq:proba-bond},
$$
\mb{P} ( u_x \in e^{i [-\pi/4, \pi/4]} ~ |~  (xy) \text{ is open }, u_y )  = \frac{ \mb{P}(u_x \in e^{i [-\pi/4, \pi/4]} ~ | ~ u_y) - \mb{P}(u_x \in R e^{i [-\pi/4, \pi/4]} ~ | ~ u_y)}{\mb{P}(u_x \in \mb{U}_R ~ | ~ u_y) - \mb{P}(u_x \in \mb{U}_L ~ | ~ u_y)}.
$$
This function is continuous and positive in $u_y \in \mb{U}_R$. Indeed, it is clear away from $u_y = \pm i$. Furthermore, as $u_y \to i$,
$$
\frac{\int_{e^{i [-\pi/4, \pi/4]}} w(u, u_y) - w(u, R u_y) \lambda(du)} {\int_{e^{i [-\pi/2, \pi/2]}} w(u, u_y) - w(u, R u_y) \lambda(du)} \to \frac{\int_{e^{i [-\pi/4, \pi/4]}} \frac{d}{dh}_{|h=0} w(u, i e^{ih}) \lambda(du)}{ \int_{e^{i [-\pi/2, \pi/2]}} \frac{d}{dh}_{|h=0} w(u, i e^{ih}) \lambda(du) } >0
$$
as $w(u, i e^{-ih}) - w(u, i e^{ih}) = -2h \frac{d}{dh}_{|h=0} w(u, i e^{ih}) + o(h)$, from the strict monotonicity of $w$.

Note that when $w(u_x,u_y) = \rho(\cos(\theta_x-\theta_y)) $ with $\rho$ increasing, the RHS is equal to $\frac{\rho(\sqrt{2}/2)-\rho(-\sqrt{2}/2)}{\rho(1)-\rho(-1)}$ when $u_y = i$.

The general case is treated similarly. We roughly sketch it and omit the details. In this case, with $s \neq \emptyset$, we write
$$
\mb{P} ( du_x ~ |~  x \leftrightarrow s, (u_y)_{y \in N} ) = \frac{\prod_{y \sim x} w(u_x,u_y) \prod_{z \in s} p(u_x, u_z)}{\int_{\mb{U}_R} \prod_{y \sim x} w(u,u_y) \prod_{z \in s} p(u, u_z) \lambda(du)} du_x.
$$
There we integrate in $e^{i[-\pi/4,\pi/4]}$. The $y$'s in $N \backslash s$ do not cause concern. For those in $s$,  we use $w(u_x, u_y) p(u_x,u_y) = w(u_x,u_y)-w(u_x,Ru_y)$ and proceed as above.
\end{proof}

\paragraph{Clusters correlation.} The clusters we introduced in the Villain model are all measurable with respect to the extended model with the cable systems. There are two clusters in Proposition \ref{prop:Villain-k=2} that naturally live in the same space (in term of randomness). To consider two random clusters associated with pairs of open/close bonds for the model \ref{eq:def-model}, we need to prescribe their joint distribution. The choice we make is motivated by the explicit joint distribution for the Villain model, written in a more general fashion.

\medskip

We still use the expression of the bond probability from \eqref{eq:proba-bond}, i.e.
$$
p(u_x,u_y) = ( 1 - \frac{w(R(u_x),u_y)}{w(u_x,u_y)} ) 1_{u_x, u_y \text{ are in the same side}}
$$
when considering reflections associated with two half-circles. We will consider the clusters $C_1$ (associated with the half-circles separated by $\xi$, $-\xi$, with reflection $R=R_1$) made of what we call \textit{bonds 1}, denoted by $e_{xy}^1$, and $C_2$ (associated with the half-circles separated by $\overline{\xi}$, $-\overline{\xi}$, with reflection $R=R_2$), made of \textit{bonds 2}, denoted by $e_{xy}^2$. We also denote by $C_i^z$ the connected component of $C_i$ which contains $z$.

To define them on the same probability space, it is enough to impose the covariance or correlation between the two events. If $X \sim \Ber(p)$, $Y \sim \Ber(q)$, then  $c = \mathbb{P}(X=1, Y=1)$ determines the joint distribution of $X$ and $Y$ as well. (Note also that $\Cov(X,Y) = c - pq$.)
\begin{align*}
\mb{P}(X=1,Y=1) & = c \\
\mb{P}(X=1,Y=0) & = p - c \\
\mb{P}(X=0,Y=1) & = q - c  \\
\mb{P}(X=0,Y=0) & = 1 - p - q + c
\end{align*}
These equalities immediately imply the constraints:
\begin{equation}
\label{eq:c-constraint}
\max(0,p+q-1) \leq c \leq \min(p,q)
\end{equation}
and any $c$ is this range can be chosen. 

In the case of the Villain model we can compute the corresponding value of $c$ above explicitly. We have in this case
$$
w(u_x,u_y) = p_t^{\mb{U}}(\theta_1, \theta_2) , \qquad \text{where } u_x = e^{i \theta_1}, u_y = e^{i \theta_2}.
$$
Recalling $\xi = e^{i \pi/4}$, with $R_1(u_x) = \xi^2 \overline{u_x}$ (the reflection fixing $\xi$ and $-\xi$),  and $R_2(u_x) = \overline{\xi}^2 \overline{u_x}$ (the one fixing $\bar{\xi}$ and $-\bar{\xi}$), we observe that
\begin{align}
& p_{xy} = \mb{P}(e_{xy}^1 =1) = ( 1 - \frac{w(R_1 u_x,u_y)}{w(u_x,u_y)} ) 1_{u_x, u_y \text{ are in the same side of } \{ \pm \xi \}^c}, \label{eq:cluster-C1} \\
& q_{xy} = \mb{P}(e_{xy}^2=1) = ( 1 - \frac{w(R_2 u_x,u_y)}{w(u_x,u_y)} ) 1_{u_x, u_y \text{ are in the same side of } \{ \pm \overline{\xi} \}^c},   \label{eq:cluster-C2}
\end{align}
and
\begin{equation}
\label{eq:def-c}
c = \mb{P}(e_{xy}^1=1, e_{xy}^2=1)  = \frac{p_t^{[-\pi/4,\pi/4]}(\theta_1,\theta_2)}{p_t^{\mb{U}}(\theta_1,\theta_2)}  1_{u_x, u_y \in [-\pi/4, \pi/4]\mod \pi/2}.
\end{equation}

Furthermore, by using the reflection principle and splitting the following series with $k$ odd and $k$ even, we have
\begin{align*}
p_t^{[-\pi/4,\pi/4] }  = & \sum_{k \in \mb{Z}} p_t(\theta_1, \theta_2 + k \pi) - p_t(\theta_1, -\theta_2 + k \pi - \pi/2) \\
= & \sum_{k \in \mb{Z}} p_t(\theta_1, \theta_2 + 2k \pi) + \sum_{k \in \mb{Z} }  p_t(\theta_1, \theta_2 + 2k \pi+ \pi)  \\
& - \sum_{k \in \mb{Z}} p_t(\theta_1+\theta_2+2k\pi+\pi/2) -  \sum_{k \in \mb{Z}} p_t(\theta_1+\theta_2+2k\pi-\pi/2) \\
= & w(u_x,u_y) + w(u_x, - u_y) - w(R_1(u_x),u_y)  - w(R_2(u_x),u_y),
\end{align*}
since
\begin{align*}
w(R_1(u_x),u_y) & = w(e^{i ( \pi/2-\theta_1)}, e^{i \theta_2} ) = \sum_{k \in \mb{Z}} p_t(\theta_1+\theta_2+2k \pi - \pi/2), \\
w(R_2(u_x),u_y) & = w(e^{-i ( \pi/2+\theta_1)}, e^{i \theta_2} ) = \sum_{k \in \mb{Z}} p_t(\theta_1+\theta_2+2k \pi + \pi/2).
\end{align*}
Hence, in the Villain model, $c$ defined as in \eqref{eq:def-c} can be expressed as 
\begin{equation}
\label{eq:c-value}
c = (1  - \frac{w(R_1 u_x ,u_y)}{w(u_x,u_y)} - \frac{w(R_2 u_x ,u_y)}{w(u_x,u_y)} + \frac{w(u_x, - u_y)}{ w(u_x,u_y)}) 1_{u_x, u_y \in [-\pi/4, \pi/4]\mod \pi/2}.
\end{equation}
Alternatively, this can also be seen by using directly the reflection principle (or method of images) for Brownian motion on the circle. Furthermore, we note that $R_1 R_2 = R_2 R_1 = - \mathrm{Id}$.

\medskip

\textbf{Remark.}  For a general weight function as in \eqref{eq:def-model}, as soon as $c$ defined in \eqref{eq:def-c} satisfies $c \geq 0$, the other constraints in \eqref{eq:c-constraint} are all satisfied. Indeed, if $u_x, u_y$ are not in the same connected component of $\mb{U} \backslash \{\pm \xi, \pm \bar{\xi} \}$, $c_{xy} = 0 \leq p_{xy}$. When they are in the same component, $p_{xy} \geq c_{xy}$ from the fact that $w(R_2 u_x, u_y) \geq w(-u_x,u_y)$ (the spins in the first term are in adjacent connected components, whereas they are in opposite components in the second one). Similarly, we obtain $q_{xy} \geq c_{xy}$. Finally, if the spins are not in the same component, either $p_{xy} = 0$ or $q_{xy} = 0$ and $p_{xy}+q_{xy} - 1 \leq 0$. Else, it is equal to $c_{xy} - w(-u_x,u_y)/w(u_x,u_y) \leq c_{xy}$.

\bigskip

\textbf{Remark.} In the XY model, the non-negativity of $c$ in \eqref{eq:c-value} boils down to the following inequality: for every $\theta_x, \theta_y \in [-\pi/4,\pi/4]$,
$$
e^{\beta \cos(\theta_x-\theta_y)} + e^{-\beta \cos(\theta_x-\theta_y)} \geq e^{\beta \sin(\theta_x+\theta_y)} + e^{-\beta \sin(\theta_x+\theta_y)}.
$$
For any $w$, this becomes an equality when $\theta_x \in \{-\pi/4, \pi/4\}$, for every $\theta_y$, since one reflection fixes $u_x$ and the other maps it to $-u_x$. By a change of variables $u = \theta_x - \theta_y$, $v = \theta_x + \theta_y$, we see that 
$$
\frac{d}{d u} (e^{\beta \cos u} + e^{-\beta \cos u }) = - \beta \sin u \ (e^{\beta \cos u} - e^{- \beta \cos u} ), \quad \frac{d}{d v} (e^{\beta \sin v} + e^{-\beta \sin v }) =  \beta \cos v \ (e^{\beta \sin v} - e^{- \beta \sin v} )
$$
This is equal  to zero when $u = 0 \mod \pi$ or $u = \pi/2 \mod \pi$ and the same result holds for $v$. The case $u = 0$ corresponds to $\theta_x = \theta_y$, and $v = 0$ gives $\theta_x = - \theta_y$. In this case, we indeed have $e^{\beta} + e^{-\beta} \geq 2$.

In the case of a more general increasing function $\rho : [-1, 1] \to (0, \infty)$ and $w(u_x,u_y) =\rho(\cos(\theta_x-\theta_y))$, this becomes
$$
\frac{d}{d u} \rho(\cos u) = - \sin u \ ( \rho'(\cos u) - \rho'(-\cos u) ), \quad  \frac{d}{d v} \rho(\sin v) = \cos v \ ( \rho'(\sin v) - \rho'(- \sin v) ).
$$
If $\rho$ is strictly convex, $\rho'$ is increasing and the condition boils down again to $\rho(1) + \rho(-1) \geq 2 \rho(0)$, which is also satisfied by the convexity assumption.

\medskip

For the following Proposition \ref{prop:XY-k=2}, Lemma \ref{lem:XY-preserved-map} and Theorem \ref{thm:general}, we consider an extended model (i.e., with additional pairs of bonds) for which $c$ defined in \eqref{eq:c-value} satisfies the constraints \eqref{eq:c-constraint} with $p,q$ respectively given in \eqref{eq:cluster-C1} and \eqref{eq:cluster-C2}.  

\begin{Prop} There is $\delta=\delta(\deg(x))>0$ such that, 
\label{prop:XY-k=2}
$$
\delta\leq\frac{\langle \cos(2\theta_x)\rangle}{\P(x\stackrel{C_1}{\longleftrightarrow} \partial, x\stackrel{C_2}{\longleftrightarrow} \partial)}\leq 1.
$$
where $C_1$ alone (resp. $C_2$) corresponds to clusters of bonds that are open with probability $p$ as in \eqref{eq:cluster-C1} (resp. $q$ as in \eqref{eq:cluster-C2}), with correlation determined by the parameter $c$ in \eqref{eq:c-value}.
\end{Prop}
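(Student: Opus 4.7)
The plan is to follow the template of Proposition \ref{prop:Villain-k=2}, but replace the metric-graph strong Markov property of the extended Villain model by the swapping maps attached to the two independent bond systems. Specifically, Lemma \ref{lem:XY-preserved-map} (applied to the extended O($2$) model with both bond fields $e^1, e^2$ coupled through the parameter $c$ of \eqref{eq:c-value}) should provide two measure-preserving involutions $\bar{\sigma}_1^x, \bar{\sigma}_2^x$: on the event $\{x \notin C_i\}$, $\bar{\sigma}_i^x$ reflects every spin in the connected component $C_i^x$ by $R_i$, and leaves the bond variables $(e^1, e^2)$ untouched. Note that $R_1(u)^2 = \xi^4 \overline{u}^2 = -\overline{u^2}$ and likewise $R_2(u)^2 = \overline{\xi}^{4}\overline{u}^2 = -\overline{u^2}$, so in both cases $\Re(u_x^2) = \cos(2\theta_x)$ gets flipped into its opposite when $x$ is being reflected.

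For the \textbf{upper bound}, observe that the events $\{x \in C_1\}$, $\{x \in C_2\}$, being determined by spin signs relative to $\{\pm\xi\}$ and $\{\pm\bar\xi\}$, are preserved by $\bar{\sigma}_1^x$ and $\bar{\sigma}_2^x$. Combined with the sign flip above, applying $\bar{\sigma}_1^x$ yields
$$
\langle \cos(2\theta_x)\,\ind_{x\notin C_1}\rangle
= -\langle \cos(2\theta_x)\,\ind_{x\notin C_1}\rangle = 0,
$$
and similarly with $\bar{\sigma}_2^x$ for $\{x\notin C_2\}$. Consequently
$$
\langle \cos(2\theta_x)\rangle = \langle \cos(2\theta_x)\,\ind_{x\in C_1\cap C_2}\rangle.
$$
Since $u\equiv 1$ on $\partial$, the side of $\mathbb{U}\setminus\{\pm\xi\}$ containing the boundary values is $e^{i(-\pi/4,3\pi/4)}$; for $\{\pm\bar\xi\}$ it is $e^{i(-3\pi/4,\pi/4)}$. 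Hence on $\{x\in C_1\cap C_2\}$ one necessarily has $\theta_x\in(-\pi/4,\pi/4)$, so $\cos(2\theta_x)>0$, which gives the upper bound $\langle\cos(2\theta_x)\rangle\le \mathbb{P}(x\in C_1\cap C_2)$.

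For the \textbf{lower bound}, I follow the local resampling argument used in Proposition \ref{prop:XY-k=1}. Let $N$ be the set of neighbors of $x$ in $\mc{G}$. Conditioning on the spins $(u_y)_{y\in N}$ and on the bond statuses of all edges not incident to $x$, the event $\{x \in C_1\cap C_2\}$ splits according to the (random) sets $s_1, s_2 \subseteq N$ of neighbors that are connected to $\partial$ through $C_1$, respectively $C_2$, without using $x$. On this event, the conditional density of $u_x$ on the arc $e^{i(-\pi/4,\pi/4)}$ is proportional to
$$
\prod_{y\in N} w(u_x,u_y)\cdot\mathbb{P}\bigl(x\leftrightarrow s_1\text{ via }e^1,\ x\leftrightarrow s_2\text{ via }e^2 \,\big|\, u_x,(u_y)_{y\in N}\bigr),
$$
and, by expanding the joint bond probabilities via the explicit formulas \eqref{eq:cluster-C1}, \eqref{eq:cluster-C2}, \eqref{eq:c-value}, one reduces to showing that this conditional density places uniformly positive mass on a sub-arc like $\theta_x\in(-\pi/8,\pi/8)$, where $\cos(2\theta_x)\ge \cos(\pi/4)>0$. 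Uniformity in $(u_y)_{y\in N}\in\mathbb{U}^{\deg(x)}$ follows from strict positivity and continuity on a compact set, using the strict monotonicity of $w$ to rule out vanishing limits at the critical angles $\pm\xi,\pm\bar\xi$, exactly as in the endgame of Proposition \ref{prop:XY-k=1}.

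The main obstacle is the upper bound: the two reflections act on the same bond configuration but belong to distinct cluster systems correlated through $c$, and one must verify that each $\bar{\sigma}_i^x$ remains measure-preserving in the extended joint model, not merely in the single-cluster model of Lemma \ref{lem:preserve-xy-1}. This compatibility is precisely the content of Lemma \ref{lem:XY-preserved-map}, and it is enforced by the constraint \eqref{eq:c-constraint} on $c$, combined with the $R_i$-invariance of the parameter $c$ in \eqref{eq:c-value} (which is itself a consequence of $R_1 R_2 = -\mathrm{Id}$ and the rotation/reflection invariance of $w$).
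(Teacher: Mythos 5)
Your proof follows the paper's approach: Lemma \ref{lem:XY-preserved-map} supplies the two measure-preserving involutions, the identity $R_i(u)^2=-\overline{u^2}$ produces the sign flips, and the lower bound adapts the local resampling of Proposition \ref{prop:XY-k=1}. Two small points worth tightening before this reads as airtight: (i) passing from $\langle\cos(2\theta_x)\ind_{x\notin C_1}\rangle=\langle\cos(2\theta_x)\ind_{x\notin C_2}\rangle=0$ to $\langle\cos(2\theta_x)\rangle=\langle\cos(2\theta_x)\ind_{x\in C_1\cap C_2}\rangle$ by inclusion--exclusion also needs $\langle\cos(2\theta_x)\ind_{x\notin C_1,\,x\notin C_2}\rangle=0$; the cleaner route (as in Proposition \ref{prop:Villain-k=2}) is to decompose $\{x\in C_1\cap C_2\}^c$ into three disjoint events, each killed by applying one of the $\bar\sigma_i^x$; (ii) the events $\{x\in C_i\}$ in the extended model are determined by the bond fields $e^1,e^2$, not ``by spin signs'', so the reason they are preserved is simply that $\bar\sigma_j^x$ fixes $e$ (the arc labels you wrote for $\{\pm\xi\}$ and $\{\pm\bar\xi\}$ are also interchanged, though the intersection $(-\pi/4,\pi/4)$ is unchanged).
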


To prove this proposition, we need to introduce the map $\sigma_1^z$ which reflects the spins of the cluster $C_1^z$  using $R_1$ when $z$ is not connected to $\partial$ through $C_1$ (equivalently, when $C_1^z \cap \partial \mc{G} = \emptyset$). The map $\bar{\sigma}_2^z$ is defined analogously with $C_2$ and $R_2$. We also introduce $\bar{\sigma}_i^z : (u,e) \mapsto (\sigma_i^z(u,e),e)$. Here, $e = ((e_{xy}^1)_{(xy) \in E}, (e_{xy}^2)_{(xy) \in E})$.

We note that if two neighboring spins in $C_1$ are flipped using $R_1$ then: if they are in the same arc of $\mb{U}$ for $C_2$, they still are after reflection by $R_1$; if there are not in the same arc for $C_2$, this is also preserved after reflection by $R_1$.

\begin{Lem}
\label{lem:XY-preserved-map}
The maps $\bar{\sigma}_1^z$ and $\bar{\sigma}_2^z$ defined on the space $((u_x)_{x \in \mc{G}}, (e_{xy}^1)_{(xy) \in E}, (e_{xy}^2)_{(xy) \in E})$ are measure preserving.
\end{Lem}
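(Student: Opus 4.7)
The plan mirrors Lemma \ref{lem:preserve-xy-1}. Since $\bar\sigma_1^z$ leaves the bonds fixed and only reflects the spins of $C_1^z$ via $R_1$, the Radon-Nikodym derivative factors over edges as
$$\frac{dP\circ\bar\sigma_1^z}{dP}(u,e^1,e^2)=\prod_{(xy)\in E}\frac{w(\tilde u_x,\tilde u_y)\,\mb{P}(e^1_{xy},e^2_{xy}\mid \tilde u_x,\tilde u_y)}{w(u_x,u_y)\,\mb{P}(e^1_{xy},e^2_{xy}\mid u_x,u_y)},$$
where $\tilde u$ is the reflected spin configuration. I would split $E$ into interior edges (both endpoints in $C_1^z$), exterior edges (both outside), and boundary edges, and show each factor equals $1$.

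Exterior edges are trivial. For interior edges both spins get reflected by $R_1$, so the rotation/reflection invariance of $w$ gives $w(R_1u_x,R_1u_y)=w(u_x,u_y)$; the same invariance applied to \eqref{eq:cluster-C1}--\eqref{eq:c-value}, together with $R_1^2=\mathrm{Id}$ and $R_1R_2=-\mathrm{Id}$, shows that $p_{xy},q_{xy},c_{xy}$ are all unchanged (for $q$, e.g., $w(R_2R_1u_x,R_1u_y)=w(-u_x,R_1u_y)=w(R_2u_x,u_y)$ via rotation by $-1$). The substantial case is the boundary edges: by definition of $C_1^z$ one has $e^1_{xy}=0$, so what remains is
$$w(u_x,u_y)\,\mb{P}(e^1_{xy}=0,\,e^2_{xy}\mid u_x,u_y)=w(R_1u_x,u_y)\,\mb{P}(e^1_{xy}=0,\,e^2_{xy}\mid R_1u_x,u_y),\qquad e^2_{xy}\in\{0,1\}.$$
Summing these two equations over $e^2_{xy}$ reproduces exactly $w(u_x,u_y)(1-p_{xy})=w(R_1u_x,u_y)(1-\tilde p_{xy})$, the identity already established in Lemma \ref{lem:preserve-xy-1}, so only one independent equation remains, say the one for $e^2_{xy}=1$.

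To verify the remaining identity I would run a case analysis on the quadrants of $(u_x,u_y)$ cut by the $R_1$ and $R_2$ axes. Since $R_1$ permutes the four quadrants by swapping east with north and south with west, the configurations of $(u_x,u_y)$ pair into two types of orbits under $u_x\mapsto R_1u_x$: (same quadrant) paired with (different $R_1$-sides, same $R_2$-side); and (different quadrants in the same $R_1$-side) paired with (opposite quadrants). Within each orbit both sides of the identity reduce, via \eqref{eq:cluster-C1}--\eqref{eq:c-value}, to explicit combinations of $w(u_x,u_y),w(R_1u_x,u_y),w(R_2u_x,u_y),w(u_x,-u_y)$, and match thanks to the symmetries $w(u_x,-u_y)=w(-u_x,u_y)$ (rotation by $\pi$) and $w(R_2R_1u_x,u_y)=w(-u_x,u_y)$ (from $R_1R_2=-\mathrm{Id}$). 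The main obstacle is precisely this quadrant bookkeeping, where the joint law of $(e^1,e^2)$ genuinely couples the two reflection structures; everything else is parallel to Lemma \ref{lem:preserve-xy-1}. The claim for $\bar\sigma_2^z$ follows by the symmetric argument with the roles of $R_1,R_2$ and of $p,q$ interchanged.
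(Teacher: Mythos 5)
Your argument is essentially the paper's: reduce to an edge-wise Radon--Nikodym check, dispatch interior and exterior edges by the invariance of $w$ together with $R_1^2=\mathrm{Id}$ and $R_1R_2=-\mathrm{Id}$, and verify the boundary-edge identity by a quadrant case analysis using the relations among $w(u_x,u_y)$, $w(R_1u_x,u_y)$, $w(R_2u_x,u_y)$, $w(-u_x,u_y)$. Your one streamlining, that the $b^0$ identity on boundary edges follows from the $b^2$ identity and the already-established relation $w(u_x,u_y)\bigl(1-p(u_x,u_y)\bigr)=w(R_1u_x,u_y)\bigl(1-p(R_1u_x,u_y)\bigr)$ from Lemma~\ref{lem:preserve-xy-1} (since $b^0+b^2=1-p$), is sound and cuts the paper's six explicit boundary-edge cases down to roughly two.
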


We postpone the proof of Lemma \ref{lem:XY-preserved-map} and prove first Proposition \ref{prop:XY-k=2}.

\begin{proof}[Proof of Proposition \ref{prop:XY-k=2}] 

The upper bound relies on the exact cancellations coming from the invariance under $\bar{\sigma}_i^z$ (Lemma \ref{lem:XY-preserved-map}) with the reflections of the spins. Again, we emphasize that $e$ is unaffected by these maps and only spins may be affected. Therefore, connectivity properties of the random clusters remain unchanged, and spins are flipped. The remaining details are in the proof of Proposition \ref{prop:Villain-k=2}.

The lower bound is obtained as before. 
\end{proof}

\smallskip

Altogether, we have
\begin{thm}
\label{thm:general}
Consider an O($2$) model as in \eqref{eq:def-model}, and suppose furthermore that the weight function $w (e^{i \theta_x}, e^{i \theta_y})$ is given by the Villain model or by $\rho(\cos(\theta_x-\theta_y))$ with $\rho$ strictly convex. Then, there exists extended models $((u_x)_{x \in \mc{G}}, (e_{xy})_{(xy) \in E})$ and $((u_x)_{x \in \mc{G}}, (e_{xy}^1)_{(xy) \in E}, (e_{xy}^2)_{(xy) \in E})$ with random clusters $C$ and $(C_1, C_2)$ such that
$$
\delta\leq\frac{\langle \cos(\theta_x)\rangle}{\mb{P}(x \stackrel{C}{\longleftrightarrow} \partial)}\leq 1, \qquad
\delta\leq\frac{\langle \cos(2\theta_x)\rangle}{\P(x\stackrel{C_1}{\longleftrightarrow} \partial, x\stackrel{C_2}{\longleftrightarrow} \partial)}\leq 1,
$$
where $\delta$ depends only on the degree of $x$ and on the weight function $w$.

Furthermore, the assumptions include the case of the XY model and the distribution of the extensions with bonds coincides with the ones associated with the extended Villain model with cable systems.
\end{thm}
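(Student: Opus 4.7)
The plan is to assemble Propositions \ref{prop:XY-k=1} and \ref{prop:XY-k=2} and check their hypotheses for the two distinguished weight families. The first inequality follows immediately from Proposition \ref{prop:XY-k=1}: the Villain weight and $w(e^{i\theta_x},e^{i\theta_y}) = \rho(\cos(\theta_x-\theta_y))$ with $\rho$ increasing are both positive, continuous, reflection/rotation invariant and monotone in the relative angle, so Lemma \ref{lem:preserve-xy-1} supplies the extended model $(u,e)$ on which the swap map $\bar{\sigma}_x$ is measure preserving, and the argument of Proposition \ref{prop:XY-k=1} then produces $\delta$ depending only on $\deg(x)$ and $w$.

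For the second inequality I invoke Proposition \ref{prop:XY-k=2}, whose only nontrivial requirement is that the correlation $c$ from \eqref{eq:c-value} lies in the admissible range \eqref{eq:c-constraint}. As noted in the first remark after \eqref{eq:c-value}, this reduces to showing $c \geq 0$. In the Villain case, \eqref{eq:def-c} identifies $c_{xy}$ with $p_t^{[-\pi/4,\pi/4]}(\theta_1,\theta_2)/p_t^{\mathbb{U}}(\theta_1,\theta_2)$ on the relevant quadrant, hence nonnegative. In the convex $\rho$ case, the numerator of $c$ equals $g(\cos u) - g(\sin v)$ with $u = \theta_x - \theta_y$, $v = \theta_x + \theta_y$ and $g(t) := \rho(t) + \rho(-t)$. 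The function $g$ is even and convex, and $g'(t) = \rho'(t) - \rho'(-t) \geq 0$ for $t \in [0,1]$ since $\rho'$ is non-decreasing, so $g$ is non-decreasing on $[0,1]$. The desired inequality therefore reduces to $\cos(\theta_x-\theta_y) \geq |\sin(\theta_x+\theta_y)|$ on $\theta_x,\theta_y \in [-\pi/4,\pi/4]$. Writing $|\sin(\theta_x+\theta_y)| = \cos\bigl(\pi/2 - |\theta_x+\theta_y|\bigr)$ and using that $\cos$ is decreasing on $[0,\pi/2]$, this is equivalent to $|\theta_x-\theta_y|+|\theta_x+\theta_y| \leq \pi/2$, i.e. to $2\max(|\theta_x|,|\theta_y|) \leq \pi/2$, which is precisely the assumed constraint on $\theta_x,\theta_y$.

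Once $c \geq 0$ is in hand, Proposition \ref{prop:XY-k=2} yields the second inequality with $\delta$ depending only on $\deg(x)$ and $w$. For the last assertion of the theorem, the XY weight $w = e^{\beta\cos(\theta_x-\theta_y)}$ corresponds to $\rho(s) = e^{\beta s}$, which is strictly convex, so the XY model falls in the scope of the theorem. The coincidence with the cable-system construction of Section 3 is then a direct comparison of formulas: specializing \eqref{eq:proba-bond} to $w = p_t^{\mathbb{U}}$ gives precisely \eqref{eq:bond-proba-Villain}, which by a single application of the reflection principle on $\mathbb{U}$ is the probability that a $\mathbb{U}$-valued Brownian bridge stays in a half-circle, and \eqref{eq:def-c} specializes analogously to the probability that the bridge stays in a quarter-circle. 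The main obstacle in the plan is the verification of $c \geq 0$ in the convex $\rho$ case; everything else reduces to bookkeeping on top of the two preceding propositions.
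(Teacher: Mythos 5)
Your proof is correct and follows essentially the same route as the paper: assemble Propositions \ref{prop:XY-k=1} and \ref{prop:XY-k=2} and verify their hypotheses for the two weight families, then note that the bond-cluster distributions reduce to the cable-system ones in the Villain case (via Propositions \ref{prop:Villain-k=1} and \ref{prop:Villain-k=2}). The only place you genuinely diverge is the verification that $c\geq 0$ for strictly convex $\rho$. The paper argues this in the remark preceding the theorem by computing the partial derivatives of $g(\cos u)-g(\sin v)$ in the variables $u=\theta_x-\theta_y$, $v=\theta_x+\theta_y$, identifying the interior critical point $(u,v)=(0,0)$, noting equality on the boundary, and checking $\rho(1)+\rho(-1)\geq 2\rho(0)$. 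Your argument is tidier and more complete: since $g(t)=\rho(t)+\rho(-t)$ is even with $g'(t)=\rho'(t)-\rho'(-t)\geq 0$ on $[0,1]$ by convexity, the positivity of $g(\cos u)-g(|\sin v|)$ reduces directly to $\cos u\geq|\sin v|$, i.e.\ $|u|+|v|\leq\pi/2$, i.e.\ $2\max(|\theta_x|,|\theta_y|)\leq\pi/2$, which is exactly the domain constraint. This bypasses the implicit comparison-with-boundary step in the paper's critical-point argument and makes the inequality elementary; otherwise the two proofs are the same.
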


\paragraph{Remark.} Let us mention that the convexity assumption in the above theorem is used only to ensure the term $c$  in \eqref{eq:def-c} is non-negative. We could instead simply make that assumption.

\begin{proof}
The result follows by combining Propositions \ref{prop:Villain-k=1}, \ref{prop:Villain-k=2}, \ref{prop:XY-k=1}, and \ref{prop:XY-k=2}.
\end{proof}

Now, we come back to the proof that was skipped and which is the only remaining part to obtain Theorem \ref{thm:general}.

\begin{proof}[Proof of Lemma \ref{lem:XY-preserved-map}]
We have to show that the map $\bar{\sigma}_1^z$ associated with reflecting spins of the cluster $C_1^z$  using $R_1$  (when $z$ is not connected to $\partial$ through $C_1$) is measure-preserving. The result for the map $\bar{\sigma}_2^z$ follows by symmetry.

To this end, we will make use of the exact expression of $c$ in \eqref{eq:c-value} and of $p_{xy}, q_{xy}$. As in the proof of Lemma \ref{lem:preserve-xy-1}, we want the analogue of\eqref{eq:invariance} to be equal to one. However, in this equation only one case arose whereas now we have to take into account several possibilities.

To simplify the notation, we set
\begin{align*}
b_{xy}^0 & := \mb{P}(e_{xy}^1=0, e_{xy}^2=0) = 1 - p_{xy} - q_{xy} + c_{xy}, \\
b_{xy}^1 & := \mb{P}(e_{xy}^1=1, e_{xy}^2=0) = p_{xy} - c_{xy}, \\
 b_{xy}^2 & := \mb{P}(e_{xy}^1=0, e_{xy}^2=1) =  q_{xy} - c_{xy}.
\end{align*}

Spins and bonds outside of $C_1^z$ are not affected by $\sigma_1^z$ and the same holds for bonds between vertices in $C_1^z$: $a)$ if both bonds are open (with probability $c_{xy}$), after reflection $R_1$ of the spins, by direct inspection the probability \eqref{eq:c-value} is not affected. $b)$ In the case where only bond 1 is open (with probability $b_{xy}^1$) the probability remains unchanged by $R_1$ since $p_{xy}$ in \eqref{eq:cluster-C1} does not change.

\medskip

Most of the remaining work lies in the case of edges $(xy)$ at the boundary of $C_1^z$ where $x \in C_1^z$ and $x \sim y \notin C_1^z$. We distinguish all possible cases. It could be useful to  the reader to look at Figure \ref{fig:preserve} when following the arguments, although most cases are not covered in the figure.

\begin{figure}
\centering
\includegraphics[scale=1]{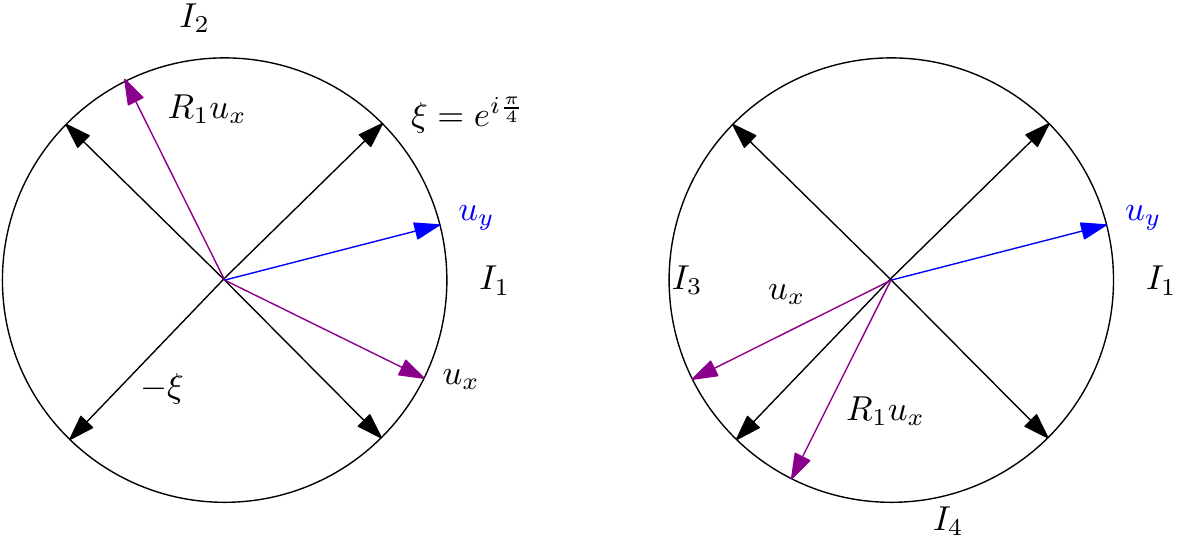}
\caption{Some of the cases and reflections involved in the proof of Lemma \ref{lem:XY-preserved-map}} 
\label{fig:preserve}
\end{figure}

\textit{Cases where both bonds are closed.} There are 4 cases (others follow by symmetries). We partition $\mb{U}$ in four arcs of equal size, $I_1, I_2, I_3, I_4$, ordered counterclockwise and with $I_1 = [-\pi/4,\pi/4] \mod 2\pi$ (see Figure \ref{fig:preserve}). We show below that, for every case , the following relation holds,
$$
\frac{w(R_1 u_x, u_y)}{w(u_x,u_y)} \frac{b^0_{R_1 u_x, u_y}}{b^0_{u_x, u_y}} = 1.
$$

Suppose $u_x, u_y \in I_1$. Only $u_x$ is reflected and $R_1 u_x \in I_2$ so $p_{xy} = c_{xy} = 0$ for the new configuration $\sigma_1^z(u)$ so this reduces to
$$
b^0_{R_1 u_x, u_y}  = 1 - ( 1 - \frac{w(R_2 R_1 u_x,u_y)}{w(R_1 u_x,u_y)} ) =  \frac{w(R_2 R_1 u_x,u_y)}{w(R_1 u_x,u_y)}.
$$
The result follows from
\begin{align*}
b^0_{u_x, u_y} = & 1 - ( 1 - \frac{w(R_1 u_x ,u_y)}{w(u_x,u_y)} ) - ( 1 - \frac{w(R_2 u_x ,u_y)}{w(u_x,u_y)} ) \\
& +  (1  - \frac{w(R_1 u_x ,u_y)}{w(u_x,u_y)} - \frac{w(R_2 u_x ,u_y)}{w(u_x,u_y)} + \frac{w(u_x, - u_y)}{ w(u_x,u_y)}) \\
= & \frac{w(R_2 R_1 u_x, u_y)}{ w(u_x,u_y)}).
\end{align*}

Suppose $u_x \in I_2, u_y \in I_1$. Then $R_1 u_x \in I_1$ so from the equalities of the previous case (in reverse order)
$$
b^0_{R_1 u_x, u_y} = \frac{w(R_2 R_1 R_1 u_x, u_y)}{ w(R_1 u_x,u_y)}) =  \frac{w(R_2 u_x, u_y)}{ w(R_1 u_x,u_y)}), \qquad
b^0_{u_x, u_y}  = \frac{w(R_2  u_x,u_y)}{w(u_x,u_y)}.
$$

Suppose $u_x \in I_3, u_y \in I_1$. Then $R_1 u_x \in I_4$ so $q_{xy} = c_{xy} = 0$ for the new configuration $\sigma_1^z(u)$ so this reduces to
$$
b^0_{R_1 u_x, u_y}  = 1- ( 1 - \frac{w(R_1 R_1u_x ,u_y)}{w(R_1 u_x,u_y)} ) = \frac{w(u_x ,u_y)}{w(R_1 u_x,u_y)}.
$$
For the configuration $u$, $p_{xy} = q_{xy} = c_{xy} = 0$ so $b^0_{u_x, u_y}  = 1$ and the result follows.

Finally, suppose $u_x \in I_4, u_y \in I_1$. The computations follow by the results of the previous case and we obtain
$$
b^0_{R_1 u_x, u_y} = 1, \qquad b^0_{u_x, u_y} = \frac{w(R_1 u_x,u_y)}{w(u_x ,u_y)}.
$$

\textit{Cases where only the bond 2 is open.}
Similarly, we want to show that 
$$
\frac{w(R_1 u_x, u_y)}{w(u_x,u_y)} \frac{b^2_{R_1 u_x, u_y}}{b^2_{u_x, u_y}} = 1.
$$
Here, two cases occur modulo symmetries. 

Suppose $u_x, u_y \in I_1$. Then
\begin{align*}
b_{u_x, u_y}^2 & = ( 1 - \frac{w(R_2 u_x ,u_y)}{w(u_x,u_y)} )  -  (1  - \frac{w(R_1 u_x,u_y)}{w(u_x,u_y)} - \frac{w(R_2 u_x ,u_y)}{w(u_x,u_y)} + \frac{w(u_x, - u_y)}{ w(u_x,u_y)})  \\
& = \frac{w(R_1 u_x ,u_y)}{w(u_x,u_y)} - \frac{w(R_1 R_2 u_x, u_y)}{ w(u_x,u_y)},
\end{align*}
and since $R_1 u_x \in I_2$, $c_{xy} = 0$ for $\sigma_1^z(u)$ so the result follows from
$$
b_{R_1 u_x, u_y}^2 = ( 1 - \frac{w(R_2 R_1 u_x ,u_y)}{w(R_1 u_x,u_y)} ).
$$

Now, suppose $u_x \in I_2, u_y \in I_1$. Using the intermediate results of the previous case, we find
$$
b_{u_x, u_y}^2 =  ( 1 - \frac{w(R_2 u_x ,u_y)}{w(u_x,u_y)} )
$$
and
$$
b_{R_1 u_x, u_y}^2 =  \frac{w(R_1 R_1 u_x ,u_y)}{w(R_1 u_x,u_y)} - \frac{w(R_1 R_2 R_1 u_x, u_y)}{ w(R_1 u_x,u_y)} = \frac{w(u_x ,u_y)}{w(R_1 u_x,u_y)} - \frac{w(R_2 u_x, u_y)}{ w(R_1 u_x,u_y)}.
$$
This concludes the proof.
\end{proof}

\section{Extensions, questions and open problems}

\subsection{Dilute Potts model}

Following \cite{Dilute-Potts}, the dilute Potts model is a model of spins taking values in $\{1, \dots, Q\}$ with vacancies (represented by the state $0$), whose distribution is given by
\begin{equation}
\label{eq:dilute-potts}
\P((\sigma_x)_x)\propto e^{K \sum_{ x \sim y} \delta_{\sigma_x, \sigma_y} (1- \delta_{\sigma_x \sigma_y, 0})  + V \sum_{ x \sim y}\delta_{\sigma_x \sigma_y, 0} + D \sum_x \delta_{\sigma_x,0}},
\end{equation}
where $\delta_{a,b} = 1$ if $a=b$ and zero otherwise. It generalizes the Potts model and admits a random-cluster representation \cite[Equation (5)]{Dilute-Potts}, as the Potts model itself  \cite{Grimmett}. It also generalizes the Blume-Capel model, a version of the Ising model with vacancies introduced in \cite{Blume, Capel}, and more generally the Blume-Capel-Potts model, which was studied in the mathematics literature in \cite{Grimmett-Blume-Capel} (see, e.g., \cite[Section 5]{Dilute-Potts}). We refer the reader to \cite{Grimmett-Blume-Capel, Dilute-Potts} and the references therein for background on these models, simulations, and their conjectured phase diagrams.

The goal of this section is to explain in which sense the usual random cluster model (i.e., for Potts) can be seen from random clusters in the extension of the Potts model to cable systems and that the models arising from this extension fall in \eqref{eq:dilute-potts}.

\smallskip

\paragraph{The Markov chain and the cable system.} We consider a continuous-time Markov chain with $Q+1$ states: $Q$ spins labeled by $\{1, \dots, Q\}$ and an empty state labeled by $0$. The process evolves as follows: when in the empty state, it  jumps to any other state at rate $\lambda$ and when in a spin state, it jumps to the empty state with rate $1$. The invariant probability measure of the system is given by
\begin{equation}
\label{eq:invariant-measure}
\mu(dy) = \frac{\lambda}{1+Q \lambda} \delta_{y \neq 0} + \frac{1}{1+Q \lambda} \delta_{y = 0},
\end{equation}
as detailed balance for the measure $\mu$ is satisfied, $\mu_1 q_{1 0} = \frac{\lambda}{1+Q \lambda}  \times 1  = \frac{1}{1+Q\lambda} \times \lambda = \mu_0 q_{0 1} $. 

Now, we consider $p_t(x,y)$ the transition density w.r.t. $\mu$, i.e.
$$
P_t f(x) = \E_x [ f(X_t)] = \sum_{y} \mb{P}_x(X_t = y) f(y)   = \int p_t(x,y) f(y) \mu(dy),
$$
which is a symmetric function, i.e., $p_t(\sigma_x, \sigma_y) = p_t(\sigma_y, \sigma_x)$ as
\begin{equation}
\label{eq:density-function}
p_t(\sigma_x, \sigma_y) = \mu(\sigma_y)^{-1} \mb{P}(X_t = \sigma_y | X_0 = \sigma_x) = \mu(\sigma_x)^{-1} \mb{P}(X_t = \sigma_x | X_0 = \sigma_y).
\end{equation}
From Chapman-Kolmogorov $P_{t+s} f(x) = P_t (P_s f)(x)$, we find $p_{t+s}(x,y) = \int p_t(x,z) p_s(z,y) \mu(dz)$ hence the extension to a cable system.

Below, we consider the measure on spins
\begin{equation}
\label{eq:measure-1}
P(\sigma) \propto \prod_{x \sim y} p_t(\sigma_x, \sigma_y) \prod_x \mu(d \sigma_x).
\end{equation}
More generally, one can also consider
\begin{equation}
\label{eq:measure-2}
P_u(\sigma) \propto u^{N_0(\sigma)} \prod_{x \sim y} p_t(\sigma_x, \sigma_y) \prod_x \mu(d \sigma_x).
\end{equation}
where $N_0(\sigma)$ is the number of vacancies in the spin configuration. The measures in \eqref{eq:measure-2}  give another parametrization of those in \eqref{eq:dilute-potts}. Indeed, the space of the $\log$ of the weights associated with two neighbors is $4$ dimensional: equal  spins, different spins, one vacancy and one spin, and two vacancies. Here, one can write $u^{N_0(\sigma)} = \prod_{x \sim y} u^{(\delta_{\sigma_x}/\mathrm{deg(x)}+\delta_{\sigma_y}/\mathrm{deg(y)})}$, where $\mathrm{deg}(x)$ is the degree of the vertex $x$. The fourth parameter is obtained by multiplying \eqref{eq:dilute-potts} or \eqref{eq:measure-2} by $e^{A}$ for a normalization parameter $A$.

\paragraph{Explicit transition densities.} We provide an explicit expression of $p_t(\sigma_x, \sigma_y)$. Consider the matrix whose entry $q_{i j}$ is  given by the rate to go from state $i$ to state $j$ and where $i=0$ corresponds to empty spin. The eigenvalues of this matrix are $0, -Q \lambda - 1$ both with multiplicity $1$, and $-1$ with multiplicity $Q-1$. So, each transition probability is of the form $\alpha e^{-(Q\lambda +1) t} + \beta e^{-t} + \gamma$ for every $t \geq 0$. To identify $(\alpha, \beta, \gamma)$, we use that $t \to \infty$ gives the invariant probability measure $\mu$, that $t = 0 $ gives either $0$ or $1$, and for $t$ close to $0$, the rates $q_{i,j}$ give the first order in $t$.  Therefore,
$$
\gamma = \mu(\sigma_y), \qquad \alpha + \beta + \gamma = \delta_{\sigma_x, \sigma_y}, \qquad - \alpha (Q \lambda +1) - \beta = q_{\sigma_x, \sigma_y},
$$
and we solve in $(\alpha, \beta)$,
$$
\begin{bmatrix}
1 & 1  \\
-(1+Q \lambda) & -1
\end{bmatrix}
\begin{bmatrix}
\alpha \\
\beta
\end{bmatrix}
= 
\begin{bmatrix}
\delta_{\sigma_x, \sigma_y} - \mu(\sigma_y) \\
q_{\sigma_x, \sigma_y}
\end{bmatrix},
\qquad
\begin{bmatrix}
\alpha \\
\beta
\end{bmatrix}
=
\frac{1}{Q \lambda} \begin{bmatrix}
- \delta_{\sigma_x, \sigma_y} + \mu(\sigma_y) - q_{\sigma_x, \sigma_y}\\
(1+ Q \lambda) (\delta_{\sigma_x, \sigma_y} - \mu(\sigma_y) )+ q_{\sigma_x, \sigma_y}
\end{bmatrix}
$$
We find, for $\sigma_x \sigma_y \neq 0$ and $\sigma_x \neq \sigma_y$,
\begin{align*}
\mb{P}_{\sigma_x}(X_t = \sigma_y) & = \frac{1}{Q(1+Q\lambda) } e^{- (Q \lambda+1)t} - \frac{1}{Q} e^{-t} + \frac{\lambda}{1 + Q \lambda} \\
 \mb{P}_{\sigma_x}(X_t = 0) & =  \frac{1}{1+ Q \lambda}(1- e^{-(Q \lambda+1) t} ) \\
\mb{P}_{\sigma_x}(X_t = \sigma_x) & = \frac{1}{Q(1+Q\lambda)} e^{-(Q \lambda+1)t} + \frac{Q-1}{Q} e^{-t} +  \frac{\lambda}{1 + Q \lambda}  
\end{align*}
and
$$
 \mb{P}_{0}(X_t = 0)  =  \frac{1}{1+ Q \lambda} (1+Q \lambda e^{- (Q\lambda+1)t}), \qquad \mb{P}_{0}(X_t = \sigma_x)  = \frac{\lambda}{1+ Q \lambda}   (1-e^{-(Q \lambda+1)t}).
$$
We note that for the transitions with an empty state, a reduced chain shows directly the absence of $e^{-t}$ term. Combining theses expressions with \eqref{eq:invariant-measure} and \eqref{eq:density-function} gives
\begin{align*}
p_t(\sigma_x,\sigma_x) & = 1 - \frac{1}{Q \lambda}  e^{-(1+Q\lambda)t} +  \frac{1+\lambda Q}{\lambda} \frac{Q-1}{Q} e^{-t},  & p_t(\sigma_x,0)  = 1 - e^{-(1+Q\lambda)t},  \\
p_t(\sigma_x,\sigma_y) & = 1 + \frac{1}{Q \lambda}  e^{-(1+Q\lambda)t} -  \frac{1+\lambda Q}{\lambda} \frac{1}{Q} e^{-t}, & p_t(0,0)  = 1 + Q \lambda e^{-(1+Q\lambda)t}.
\end{align*}
The probabilistic representation shows that each of them is positive, for every $\lambda, t >0$.

\paragraph{Clusters: correlations and connectivity.} We define the clusters to be the connected components of $\{ x : \sigma_x = 0\}^c$. There are $Q$ types of clusters, one for each spin. Associated to these is a natural percolation model where a bond is open if its extremities have the same spin and are not separated by a zero (i.e. the Markov chain bridge has not jumped), and closed otherwise (see Figure \ref{fig:extended}).  Given the values of the spins on the vertices, a bond between two neighbors $x$ and $y$ is closed if $\sigma_x \neq \sigma_y$ or $\sigma_x \sigma_y = 0$, it is otherwise closed with probability  
$$
\mb{P}_{\sigma_x} (T_0 < t | X_t = \sigma_x) = \frac{ \mb{P}_{\sigma_x}(T_0 < t)}{\mb{P}_{\sigma_x}(X_t = \sigma_x)} \mb{P}_{\sigma_x} (X_t = \sigma_x | T_0 < t) = \frac{1-e^{-t}}{\mb{P}_{\sigma_x}(X_t = \sigma_x)} \frac{1}{Q},
$$
where $T_0 = \inf \{t \geq 0 ~ : ~  X_t = 0 \}$. 

\begin{figure}
\centering
\includegraphics[scale=1]{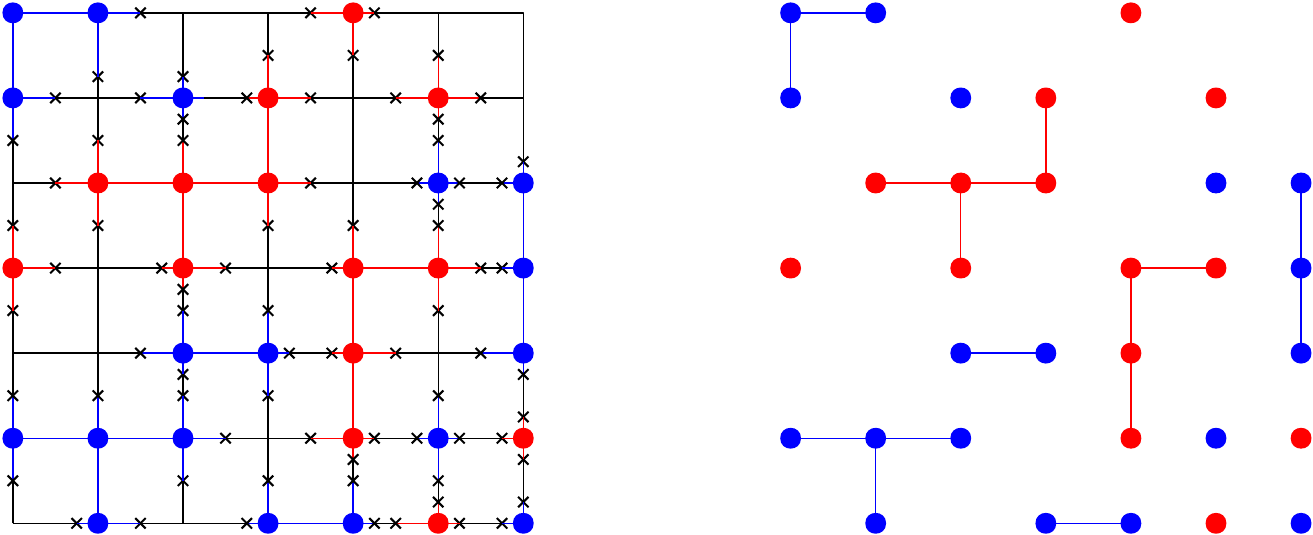}
\caption{Left: an extended configuration with two spins (blue and red) and the empty state (black); crosses mark jumps of the Markov chain bridges. Right: the associated random clusters}
\label{fig:extended}
\end{figure}

With this notion of clusters and percolation model, we have the following relation between correlations of spins and connectivity properties:
\begin{equation}
\tau_{x,y} := \E( \delta_{\sigma_x,\sigma_y} | \sigma_x \sigma_y \neq 0)  - \frac{1}{q} = (1-q^{-1}) \mb{P}(x \leftrightarrow y | \sigma_x \sigma_y \neq 0)
\end{equation}
Indeed,  $\E( \delta_{\sigma_x, \sigma_y} | \sigma_x \sigma_y \neq 0)  = \E( 1 \cdot 1_{x \leftrightarrow y} + \delta_{\sigma_x, \sigma_y} 1_{x \nleftrightarrow y} | \sigma_x \sigma_y \neq 0 )$ and the result follows from
\begin{align*}
\mb{P}(\sigma_x = \sigma_y, T_0 < t | \sigma_x \sigma_y \neq 0 ) & = \mb{P}(\sigma_x = \sigma_y | T_0< t, \sigma_x \sigma_y \neq 0) \mb{P}(T_0 < t | \sigma_x \sigma_y \neq 0) \\
& = \frac{1}{q} \mb{P} ( x \nleftrightarrow y | \sigma_x \sigma_y \neq 0).
\end{align*}
In the case of the random cluster model, this is \cite[Theorem (1.16)]{Grimmett}, and in the diluted random cluster one, this is \cite[Equation (3.9)]{Grimmett-Blume-Capel}.

\paragraph{The limit $\lambda \to \infty$ and the Potts model.}
When $\lambda \to \infty$, the invariant measure $\mu$ becomes the uniform distribution on $\{ 1, \dots, Q \}$ and for $\sigma_x \neq \sigma_y$,
$$
p_t(\sigma_x,\sigma_x)  = 1 + (Q-1) e^{-t}, \qquad p_t(\sigma_x, \sigma_y) = 1 - e^{-t}.
$$
In order to relate this limiting model with the usual random cluster model, we look for $(\alpha, \beta)$ such that
$$
p_t(\sigma_x, \sigma_y) = e^{\alpha + \beta \delta_{\sigma_x, \sigma_y}}, \qquad \beta =  \ln \frac{1+(Q-1)e^{-t}}{1-e^{-t}}, \qquad e^{-  \beta} =   \frac{1-e^{-t}}{1+(Q-1)e^{-t}}
$$
where $\beta$ is the inverse temperature of the model. To identify this extended model with the usual coupling between the $Q$-Potts model and the random cluster model, we need to verify that bonds are open only between neighboring spins with the same values with probability $1-e^{-\beta}$. When $\lambda \to \infty$ we get $\lim_{\lambda \to \infty} \mb{P}_{\sigma_x}(X_t = \sigma_x) = \frac{1}{Q} + (1-\frac{1}{Q}) e^{-t}$ hence
$$
p =  1 - \frac{1-e^{-t}}{1+(Q-1)e^{-t}} = \frac{Q e^{-t}}{1+(Q-1)e^{-t}} = 1 - e^{- \beta}
$$
and we indeed retrieve the FK model (see Theorem (1.13) part (b) in \cite{Grimmett}). A generalization of this description of the coupling in the case of the Blume-Capel-Potts and diluted random cluster models can be found in the paragraphs following the proof of Theorem 3.7 in \cite{Grimmett-Blume-Capel}.

\subsection{Heisenberg and $O(N)$ models for $N \geq 3$}

 Similar results hold for the extension of the Villain model corresponding to $O(N)$ models for which $N \geq 3$ (the Brownian motion on the unit circle for the Villain model now takes values in the $N-1$ sphere) or the Heisenberg model (whose edge weight is given by $w(u_x,u_y) = e^{\beta u_x \cdot u_y}$). 
 
 \smallskip
 
 The result can be stated as follows. In dimension $N$ with the canonical basis $(e_i)_{1\leq i \leq N}$ and coordinates $(x_i)_{1 \leq i \leq N}$, the spins take values in $\{ x : x_1^2 + \dots + x_N^2 =1 \}$. We consider the observables $x_{i_1} x_{i_2} \dots x_{i_k}$ for $k$ coordinates in $\{1, \dots, N \}$, the clusters $C_i$ of spins connected to the boundary by avoiding the hypersurface  $e_i^{\perp}$ and staying in the half-space $x_{i} >0$, and the boundary conditions given by the spins whose value is constant and positive in each of these $k$ directions and zero in the others. Then, we have (in the same sense as above), for a lattice point $z$,
 \begin{equation}
 \langle x_{i_1}(z) \dots x_{i_k}(z) \rangle \asymp  \P(z\stackrel{C_{i_1}}{\longleftrightarrow} \partial, \dots, z \stackrel{C_{i_k}}{\longleftrightarrow} \partial),
 \end{equation}
 where $x_i(z) := u_z \cdot e_i$. The proof relies on the same methodology as above, using now the reflections with respect to the hypersurfaces associated with $e_{i_1}, \dots, e_{i_k}$.
 
%
%
%
%

This is a direct extension (in the dimension) of the case $k=1$ and $k=2$ for the Villain model. In this latter case, $x = \cos(\theta)$, $y = \sin (\theta)$  then $xy = \sin(2\theta)/2$, the boundary condition is this time given by $\theta = \pi/4$ and we used the reflections over the axis $(Ox)$ and $(Oy)$. (Note that by the change of variable $\theta \mapsto \theta - \pi/4$, we retrieve the observable $\sin (2\theta - \pi/2)/2 = \cos(2\theta)$.)

\subsection{Questions and open problems}

\paragraph{Improvements of the results.} Proposition \ref{prop:Villain-k=1} and Proposition \ref{prop:Villain-k=2} above (and their extensions to other models) provide two-sided bounds between connectivity properties of the random clusters and spin correlations. One improvement would be to prove that the ratio of these observables converge to some positive constant. Furthermore, akin to the incipient infinite cluster (IIC) in percolation (defined as the limit of the conditional law of the system given that the origin is connected to the boundary of a box of size $N$ and $N \to \infty$ at $p=p_c$, or equivalently directly in infinite volume measure with $p \downarrow p_c$), a natural question is to prove the existence of an IIC-type limit for the Villain model. In this case the limiting ratios should be $\langle e^{ik\theta_0}\rangle_{IIC_k}$ for $k=1,2$ respectively.

\smallskip

\noindent \textbf{Higher-order spin observables.} We found two random cluster connectivity events that give two-sided uniform bounds on $\langle e^{i k \theta_x} \rangle$: for $k=1$ and $k=2$. In particular, these observables have the same exponents in the KT phase. The first natural question is to generalize this.

\smallskip

\textbf{Question:} 
Find a ``simple" random clusters representation that encodes $\langle e^{i k \theta_x} \rangle$ for $k \geq 3$, and more generally $\langle e^{i k_1 \theta_{x_1}} e^{i k_2 \theta_{x_2}}  \dots e^{i k_n \theta_{x_n}}  \rangle$.

\smallskip

Given our results on $\langle e^{i k \theta_x} \rangle$ for $k=1$ and $k=2$, it is natural to try to generalize them to $k \geq 3$, i.e. to find some connectivity events for random clusters in the extended Villain model whose probability is up to multiplicative constant comparable to $\langle  e^{i k \theta_x}\rangle$. For $k=3$, it seems  natural to consider $C_i := \{x\in\tilde{\mc G}: x\stackrel{\{\pm\alpha_i\}^c}{\longleftrightarrow}  \partial \}$ where $\alpha_1 = e^{i \frac{\pi}{6}}$, $\alpha_2 = e^{-i \frac{\pi}{6}}$ and $\alpha_3 = i$, and  the measure preserving mappings $\sigma_i$ associated with the reflections $R_i(z) = \alpha_i^2 \bar{z}$. However, complications arise when trying to generalize our methods to the clusters $C_1 \cap C_2$ or $C_1 \cap C_2 \cap C_3$. For instance, the reflections do not commute anymore as $R_1 R_2 (z) = e^{i \frac{\pi}{3}} z$ and $R_2 R_1 (z) = e^{-i \frac{\pi}{3}} z$. 

A direct generalization is to consider the following. Let $A$ be the set of $2k$-th roots of $(-1)$. To almost every configuration $\sigma$ we associate a combinatorial type $T(\sigma)$ in the following way. On each edge of $\tilde{\mc G}$, consider the points $x$ where $u_x\in A$. This consists a.s. of a finite number of perfect sets; points where $u_x=\alpha\in A$ not separated by a point where $u\in A\setminus\{\alpha\}$ are identified as a single point in the combinatorial type, labelled by $\alpha$. The order of these new vertices (each representing a perfect set) labelled by elements of $A$ is recorded, but not their position.

For each pair $\pm\alpha\in A$ we consider a transformation $\sigma_ \alpha=\sigma_{-\alpha}$ defined as follows: 
Let $C\subset\tilde{\mc G}$ denote the cluster
$$
C=\{y\in\tilde{\mc G}: y\stackrel{\{\pm\alpha\}^c}{\longleftrightarrow} x_0\},
$$
a random compact subset of $\tilde{\mc G}$; $x_0$ is a fixed point of interest. Then $\sigma_\alpha:\Omega=C_0(\tilde{\mc G},\U)\rightarrow\Omega$ is given by:
\begin{equation*}
\left\{\begin{array}{lll}
\sigma(u)_y&=u_y&{\rm if\ }y\in C\\
\sigma(u)_y&=\alpha^2\overline{u_y}&{\rm if\ }y\notin C, {\rm and\ }\partial\notin C.
\end{array}\right.
\end{equation*}
This gives $k$ measure-preserving involutions. Remark that $u_{x_0}^k$ is unchanged by application of $\sigma$ in the first case, and replaced by $-u_{x_0}^k$ in the second case. In the case $k=1$ (resp. $k=2$), these measure-preserving involutions on configuration space generate an abelian group of order 2 (resp. 4); whereas for $k\geq 3$ they generate an infinite group of measure-preserving transformations.

\bigskip

\noindent \textbf{Exponent interpolation.} The interpolation of the events considered for $k=1$ and $k=2$ can be understood as ``splitting and  continuously  sliding" the semi-circle $(-i,i)$ in two semi-circles $(-\xi,\xi)$ and $(\bar{\xi},- \bar{\xi})$ via $\theta \mapsto (-i e^{-i \theta }, i e^{-i \theta})$ and  $\theta \mapsto (-i e^{i \theta }, i e^{i \theta})$ on $[0, \frac{\pi}{4} ]$. Given this observation, the following question naturally arises.
 
\smallskip

 \textbf{Question:} Does the analogue of Proposition \ref{prop:Villain-k=2} hold for non-integer $1 < k < 2$, now with the semi-circles $(-i e^{i \theta }, i e^{i \theta})$ and $(-i e^{-i \theta }, i e^{-i \theta})$ and $\theta = \frac{\pi}{2k} $? 
 
 \smallskip
 
Our method of proof does not seem to apply directly to this extension, for the same difficulties as those mentioned in Question 1. There are other natural ways to define continuous families of critical exponents in the extended Villain model; in particular one can consider cluster connectivities $x\stackrel{S}{\longleftrightarrow} y$ (as in \eqref{eq:clusterdef}), where $S$ is a circular arc $\exp(i(-\alpha,\alpha))$ with $\alpha\in (0,\pi)$. This can also be made sense of if $\alpha\geq\pi$ by lifting spins to the universal cover; i.e. one can consider points that are connected to the boundary by a simple path along which $\theta$ has a real lift staying in $(-\alpha,\alpha)$.

More generally, in the XY model, there is only a countable family of spin operators (the $e^{ik\theta}$, $k\in\Z$); so that e.g. in \eqref{eq:KTconj}, the LHS only makes sense for $k_j\in\Z$, whereas the RHS is defined for $k_j\in\R$. One can further ask if there are cluster connectivity or geometric events corresponding to such non-integer $k_j$'s.

\bigskip

\noindent \textbf{Convergence of interfaces.} 
The critical Ising and FK interfaces are well known to converge to the Schramm-Loewner Evolutions and Conformal Loop Ensembles with parameter $\kappa = 3$ and  $\kappa = 16/3$ \cite{Ising-SLE, Ising-CLE}. Extending this to $q\in [1,4)$ remains a tantalizing problem. Are there similar results for interfaces of spins/random clusters in the Villain and XY models? To be more precise, we raise the following

 \textbf{Question:} Consider the extended Villain model with boundary condition $\theta =0$. Establish and describe the scaling limit of the loops obtained as boundaries of the cluster $\{ x ~ : ~  x\stackrel{\{\pm i\}^c}{\longleftrightarrow} \partial \}$, in the low temperature phase.
 
 \smallskip
 
A usual first step to study  the scaling limit of such interfaces is to derive Russo-Seymour-Welsh (RSW) type estimates in order to prove the existence of non-trivial subsequential limits via the Aizenman-Burchard criterion. One can be interested more generally in the percolation properties of these random cluster models, such as presence/absence of FKG inequality, existence of infinite clusters, etc. In the specific case of the XY and Heisenberg models, the FKG inequality was established in \cite{Chayes-1, Chayes-2}.  Note that, as the temperature goes to zero, the edge density of clusters goes to 1; however we still expect RSW estimates to hold. Furthermore, it is natural to expect that this scaling limit is related with the Conformal Loop Ensemble as well, which is characterized by conformal invariance and a domain Markov property.

For the critical Ising model, two independent rigorous approaches have been developed to study the conformal symmetries of the spin correlations: \cite{bosonization} proceeds by equating the product of two Ising correlators with a free field (bosonic) correlator; the proof \cite{CHI} was based on convergence results for fermionic observables, used in earlier works to study the conformal invariance and the convergence of critical Ising interfaces. In the KT phase considered here for continuous spin systems, due to the Fr\"ohlich-Spencer conjecture, the spins correlations should be conformally covariant,  and the conformal invariance of the scaling limit of these interfaces should also reasonably hold.  The Hausdorff dimension of the scaling limit of the interfaces should be encoded by the asymptotics of the two-point spin correlation function, in particular by the effective temperature of the associated limiting GFF.

However, the domain Markov property should a priori not hold: given the geometric position of the interface, the restriction of the spin field to the separated connected components are not independent, there is an extra randomness due to the value of the boundary spins (they are either vertical, or horizontal, see Figure \ref{fig:villain-reflection}), which is for instance not the case in the Ising model. Our situation is somewhat closer to the one of the XOR Ising model (where the spin field $\sigma_1 \sigma_2$ is given by the product of two independent Ising models $(\sigma_1)$ and $(\sigma_2)$) at criticality. Indeed, the interfaces between $+$ and $-$ spins in this model corresponds to interfaces between $\sigma_1 = \sigma_2$ and $\sigma_1 \neq \sigma_2$. In the pre-limit, the domain Markov property does not hold since given the position of the loop, the distribution of the restriction of the spin field to the separated connected components are not independent.

To further draw the comparison, we need to recall the notion of two-valued local sets of a GFF $\phi$, denoted by $A_{-a,b}(\phi)$, which are formally speaking points that can be reached from the boundary by staying in the set of level lines of the free field $\phi$ in $[-a,b]$. For a particular case, the  boundaries of this set has the law of CLE$_4$. Furthermore, an other particular case was conjectured by Wilson \cite{Wilson} to describe the scaling limit of the interfaces of the 2d critical XOR-Ising model: interfaces of the spin field $\sigma_1 \sigma_2$. Although not proved yet, \cite{DC-Lis-Qian, DC-Lis-Qian-v2} showed that this scaling limit holds for the critical double random current model on the square lattice. 

The general version of these local sets, $A_{-a,b}(\phi)$, was rigorously introduced and studied  in \cite{ASW, AS} (the latter work proved in particular that the heights of the loops are independent only in one case), and their Hausdorff dimension was computed in \cite{SSV} via a relation with (the real part of) an imaginary chaos measure, it was earlier shown in \cite{JSW} that the renormalized scaling limit of the spin configuration of an XOR-Ising model with +/+ boundary condition agrees in law with the real part of an imaginary chaos.

\bibliographystyle{abbrv}
\bibliographystyle{alpha}
\bibliography{refs}

\end{document}